\newtheorem{theorem}{Theorem}[section]
\newtheorem{lemma}[theorem]{Lemma}
\newtheorem{cor}[theorem]{Corollary}
\theoremstyle{definition}
\theoremstyle{remark}
\newtheorem{remark}[theorem]{\bf{Remark}}
\numberwithin{equation}{section}
\begin{document}
		\title[Weighted numerical radius inequalities for operator and operator matrices ]  {Weighted numerical radius inequalities for operator and operator matrices }
		\author[R.K. Nayak]{  Raj Kumar Nayak}

		\address{(Nayak) Department of Mathematics, SRM University-AP, Amaravati, India}
		\email{rajkumarju51@gmail.com}
			\subjclass[2010]{Primary 47A12, 47A30, Secondary  47A63, 15A60}
		\keywords{ Numerical radius; Norm inequality}
	\date{}
	\maketitle 
		\begin{abstract}
		The concepts of weighted numerical radius has been defined in recent times. In this article, we obtain several upper bound for weighted numerical radius of operators and $2 \times 2$ operator matrices which generalize and improves some well known famous inequality for classical numerical radius. We also obtain an upper bound for the weighted numerical radius of the Aluthge transformation, $\tilde{T}$ of an operator $T \in \mathcal{B}(\mathcal{H}),$ where $\tilde{T} = |T|^{1/2} U |T|^{1/2}$ and $T = U |T|$ be the canonical polar decomposition of $T.$

	\end{abstract}
		\section{Introduction}
	Let $\mathcal{B}(\mathcal{H})$ be the $C^*-$algebra of all bounded linear operator on a Hilbert  space $\mathcal{H}$. Cartesian decomposition of an operator $T \in \mathcal{B}(\mathcal{H})$, can be written as $T = \Re(T) + i \Im(T)$ where $\Re(T) = \frac{T + T^*}{2}$ and $\Im(T) = \frac{T-T^*}{2i}$. Generalization of this decomposition was introduced in \cite{SKS}, called weighted real and imaginary parts of $T$ defined as :  \[\Re_t(T)= (1-t)T^* + tT~~\mbox{and}~\Im_t(T) =\frac{(1-t)T - tT^*}{i}~~\mbox{for}~t\in [0,1] .\] Clearly for $t=\frac{1}{2}$, $\Re_t(T) = \Re(T)$ and $\Im_t(T) = \Im(T)$. It is easy to observe that for every operator $T \in \mathcal{B}(\mathcal{H})$, $\Re_t(T) + i\Im_t(T) = (1-2t)T^* + T.$ In \cite{CSM}, the weighted numerical radius was defined by \[w_t(T) = \sup_{\theta \in \mathbb{R}} \left\| \Re_t(e^{i\theta} T)\right\| .\] Conde et. al. in \cite{CSM} introduced weighted numerical radius in the following way: 
	\begin{eqnarray*}
	\mbox{For}~T \in \mathcal{B}(\mathcal{H})~~\mbox{and}~~t \in [0,1],&&\\ 
w_t(T) &= &\sup_{\|x\|=1} \left|\langle\left(\Re_t(T) + i \Im_t(T) \right)x, x \rangle \right|
= w\left((1-2t)T^* + T \right).
	\end{eqnarray*}
Similarly weighted operator norm of $T \in \mathcal{B}(\mathcal{H})$, is defined by \[\|T\|_t = \sup_{\|x\| = \|y\|=1} \left|\langle \left(\Re_t(T) + i \Im_t(T) \right)x, y \rangle   \right|. \] For further study on weighted numerical radius we refer the reader \cite{ SKS, CSM, Zamani2022}.
	
	We can easily observe that $\|T\|_t = \|(1-2t)T^* + T\|.$ Similar to the classical numerical radius inequality, weighted numerical radius also satisfy triangle inequality, i.e.  for $X, Y \in \mathcal{B}(\mathcal{H})$,\[ w_t(X+Y) \leq w_t(X) + w_t(Y).\] It also satisfy unitary equivalence property, i.e. for $T \in \mathcal{B}(\mathcal{H}),$ \[ w_t(U^*TU) = w_t(T).\] One can easily observe that for $t=\frac{1}{2}$, \[w_t(T) = w(T) ~\mbox{and}~\|T_t\| = \|T\|.\]
	For $T \in \mathcal{B}(\mathcal{H})$, $w(T)$ is a norm on $\mathcal{B}(\mathcal{H}),$ equivalent to the operator norm that satisfy the inequality, 
\begin{equation} \label{basic}
	\frac{\|T\|}{2}  \leq w(T) \leq \|T\|.
\end{equation}

	Various refinements of inequality \ref{basic} has been obtained by famous mathematicians in recent times  such as \cite{F} follows: \[\frac{1}{4} \|T^*T + TT^*\| \leq w^2(T) \leq \frac{1}{2} \|T^*T + TT^*\| .\]
	Pintu et. al. in \cite[Th. 2.1]{PLAA2021} prove that for $T \in \mathcal{B}(\mathcal{H})$, \[w(T) \geq \frac{1}{2} \|T\| +\frac{1}{2} \left|\|\Re (T)\| - \|\Im (T)\| \right| . \]
	For $T \in \mathcal{B}(\mathcal{H}),$ the Aluthge transformation \cite{Yamazaki} of $T$, denoted as $\tilde{T}$ is defined by $\tilde{T} = |T|^{1/2} U |T|^{1/2}$ where $|T| = (T^*T)^{1/2}$ and $U$ is the partial isometry associated with the polar decomposition of $T$ and so $T = U |T|$, $Ker T = Ker U$.\\
	\noindent using Aluthge transform, Yamazaki in \cite[Th 2.1]{Yamazaki} proved that if $T \in \mathcal{B}(\mathcal{H}),$ then \[w(T) \leq \frac{1}{2} \left( \|T\| + w(\tilde{T})\right). \]
We have generalized this inequality for weighted numerical radius. We have also generalized and improves the weighted numerical radius inequalities for $2 \times 2$ operator matrices.
		\section{Weighted Inequality of Operators}
		We begin this section with few well known inequalities which will be essential to prove our theorems. We start with Buzano inequality. 
		\begin{lemma}\label{buzano}\cite{Buzano}
			Let $x, y, e \in \mathcal{H}$ with $\|e\|=1.$ Then 
		\[|\langle x, e \rangle \langle e, y \rangle| \leq \frac{1}{2} \left(|\langle x, y \rangle| + \|x\|\|y\| \right).\]
		\end{lemma}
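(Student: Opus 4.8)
The plan is to realize the scalar product $\langle x,e\rangle\langle e,y\rangle$ as a single inner product built from the rank-one orthogonal projection onto the line spanned by $e$, and then to exploit the fact that the reflection operator attached to that projection has norm one. Concretely, since $\|e\|=1$, I would define $P\in\mathcal{B}(\mathcal{H})$ by $Pz=\langle z,e\rangle e$; this is precisely the orthogonal projection onto $\mathrm{span}\{e\}$. A one-line computation gives $\langle Px,y\rangle=\langle x,e\rangle\langle e,y\rangle$, so the left-hand side of the asserted inequality is exactly $|\langle Px,y\rangle|$, and the problem is reduced to estimating this quantity.

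Next I would introduce the reflection $R=2P-I$. Because $P$ is a self-adjoint idempotent, one checks immediately that $R^*=R$ and $R^2=4P^2-4P+I=I$, so $R$ is a self-adjoint unitary. Consequently its spectrum lies in $\{-1,1\}$ and $\|R\|=1$. This structural observation is the crux of the whole argument.

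Finally, rewriting $2P=R+I$ yields $2\langle Px,y\rangle=\langle x,y\rangle+\langle Rx,y\rangle$. Applying the triangle inequality, then Cauchy--Schwarz together with $\|R\|=1$, gives
\[ 2|\langle Px,y\rangle|\le|\langle x,y\rangle|+|\langle Rx,y\rangle|\le|\langle x,y\rangle|+\|R\|\,\|x\|\,\|y\|=|\langle x,y\rangle|+\|x\|\,\|y\|, \]
and dividing by $2$ delivers the stated bound.

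Every computation above is routine, so I do not anticipate a serious obstacle; the only genuine idea is the passage to the reflection $R=2P-I$ and the recognition that its unitarity forces $\|R\|=1$. An alternative route would be to restrict to the (at most two-dimensional) subspace spanned by $e$ and a suitable component of $x$ and $y$ and verify the inequality by an explicit planar computation, but that approach is messier, and I would favor the operator-theoretic argument sketched here.
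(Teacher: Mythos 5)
Your argument is correct: $P z=\langle z,e\rangle e$ is indeed the orthogonal projection onto $\mathrm{span}\{e\}$, the reflection $R=2P-I$ is a self-adjoint involution and hence has norm one, and the chain
\[
2\left|\langle Px,y\rangle\right| \le \left|\langle x,y\rangle\right| + \left|\langle Rx,y\rangle\right| \le \left|\langle x,y\rangle\right| + \|x\|\,\|y\|
\]
delivers exactly the stated bound. Note, however, that the paper itself offers no proof to compare against: Lemma \ref{buzano} is quoted verbatim from Buzano's 1974 paper and used as a black box throughout. So your contribution is a self-contained proof where the paper relies on a citation. The reflection trick you use is in fact one of the standard modern proofs of Buzano's inequality (it circulates in the numerical-radius literature, e.g.\ in work of Fujii and Kubo), and it is considerably cleaner than Buzano's original argument, which proceeds by the kind of explicit low-dimensional reduction you mention as an alternative at the end. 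One small point of rigor worth making explicit if you write this up: the norm computation $\|R\|=1$ uses $\|R\|^2=\|R^*R\|=\|R^2\|=\|I\|=1$, which presumes $\mathcal{H}\neq\{0\}$; in the degenerate case the inequality is vacuous, so nothing is lost.
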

	Next lemma is for positive operator.
	\begin{lemma} \label{lemmasimon}
	Let $T \in \mathcal{B}(\mathcal{H})$ be a positive operator. Then for all $r \geq 1$ and $x \in \mathcal{H}$ with $\|x\|=1$, then  \[\langle Tx, x \rangle^r \leq \langle T^r x, x \rangle. \]	
	\end{lemma}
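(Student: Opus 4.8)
The plan is to reduce the operator inequality to a scalar convexity statement via the spectral theorem and then to invoke Jensen's inequality. Since $T$ is positive, its spectrum $\sigma(T)$ is contained in $[0,\infty)$ and $T$ admits a spectral resolution $T = \int_{\sigma(T)} \lambda \, dE(\lambda)$, where $E$ is the associated projection-valued measure. First I would fix the unit vector $x \in \mathcal{H}$ and define the scalar Borel measure $\mu(\Omega) = \langle E(\Omega) x, x \rangle$ on $\sigma(T)$. Because $E(\sigma(T)) = I$ and $\|x\| = 1$, its total mass is $\mu(\sigma(T)) = \langle x, x \rangle = 1$, so $\mu$ is a genuine probability measure.

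Next I would rewrite both sides of the claimed inequality as integrals against $\mu$. By the Borel functional calculus, for any bounded Borel function $f$ on $\sigma(T)$ one has $\langle f(T) x, x \rangle = \int_{\sigma(T)} f(\lambda) \, d\mu(\lambda)$. Applying this with $f(\lambda) = \lambda$ and with $f(\lambda) = \lambda^r$ gives
\[
\langle Tx, x \rangle = \int_{\sigma(T)} \lambda \, d\mu(\lambda), \qquad \langle T^r x, x \rangle = \int_{\sigma(T)} \lambda^r \, d\mu(\lambda).
\]
Since $r \geq 1$, the function $\varphi(\lambda) = \lambda^r$ is convex on $[0,\infty)$, and as $\mu$ is a probability measure, Jensen's inequality yields
\[
\langle Tx, x \rangle^r = \varphi\left( \int_{\sigma(T)} \lambda \, d\mu(\lambda) \right) \leq \int_{\sigma(T)} \varphi(\lambda) \, d\mu(\lambda) = \langle T^r x, x \rangle,
\]
which is precisely the assertion.

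I do not anticipate a serious obstacle here: the entire content is the passage from the operator to a scalar probability measure, and the only points requiring care are verifying that $\mu$ has total mass one (which uses $\|x\| = 1$) and that $\lambda \mapsto \lambda^r$ is convex exactly when $r \geq 1$ (convexity, and hence the inequality, fails for $0 < r < 1$, which explains the hypothesis). An alternative route avoiding measure theory would be to deduce the statement from the H\"older--McCarthy inequality, or, for integer $r$, from iterated use of the Cauchy--Schwarz estimate $\langle Tx, x \rangle^2 \leq \langle T^2 x, x \rangle$; however, the spectral-Jensen argument is the most direct and handles all real $r \geq 1$ uniformly.
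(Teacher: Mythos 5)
Your proof is correct: the passage from $T$ to the scalar probability measure $\mu(\cdot)=\langle E(\cdot)x,x\rangle$ is legitimate (positivity of $\mu$ follows from $\langle E(\Omega)x,x\rangle=\|E(\Omega)x\|^2\geq 0$, and $\|x\|=1$ gives total mass one), the functional-calculus identity $\langle f(T)x,x\rangle=\int f\,d\mu$ applies since $\lambda\mapsto\lambda^r$ is bounded on the compact set $\sigma(T)$, and Jensen's inequality for the convex function $\lambda\mapsto\lambda^r$, $r\geq 1$, closes the argument. The paper itself states this lemma without proof or citation --- it is the classical H\"older--McCarthy (McCarthy) inequality --- and the spectral-theorem-plus-Jensen argument you give is exactly the standard proof of that result, so there is nothing to compare against and no gap to report.
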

	Next we state generalized mixed Schwarz inequality.
\begin{lemma}\label{mixed schwarz} \cite[Th. 1]{Furuta}
	If $T \in \mathcal{B}(\mathcal{H})$, then for all $x, y \in \mathcal{H}$ and $\alpha \in [0,1]$ \[|\langle Tx, y \rangle|^2 \leq \langle |T|^{2\alpha} x, x \rangle \langle |T^*|^{2(1-\alpha)}  y, y \rangle.\]
	
\end{lemma}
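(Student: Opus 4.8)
The plan is to exploit the polar decomposition $T = U|T|$ together with the ordinary Cauchy--Schwarz inequality, splitting the exponent of $|T|$ between the two slots of the inner product. First I would write, for $x, y \in \mathcal{H}$,
\[
\langle Tx, y \rangle = \langle U|T|x, y \rangle = \langle |T|x, U^*y \rangle,
\]
and then factor $|T| = |T|^{1-\alpha}\,|T|^{\alpha}$, which is legitimate because $|T|$ is positive, so its powers are well defined, self-adjoint, and commute. Since $|T|^{1-\alpha}$ is self-adjoint this gives $\langle Tx, y \rangle = \langle |T|^{\alpha}x,\, |T|^{1-\alpha}U^*y \rangle$. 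Applying Cauchy--Schwarz and squaring yields
\[
|\langle Tx, y \rangle|^2 \le \big\| |T|^{\alpha}x \big\|^2\,\big\| |T|^{1-\alpha}U^*y \big\|^2 = \langle |T|^{2\alpha}x, x \rangle\,\langle U|T|^{2(1-\alpha)}U^*y,\, y \rangle,
\]
where I have used $\| |T|^{\alpha}x \|^2 = \langle |T|^{2\alpha}x, x \rangle$ and $\| |T|^{1-\alpha}U^*y \|^2 = \langle U|T|^{2(1-\alpha)}U^*y, y \rangle$.

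The crux is then to identify $U|T|^{2(1-\alpha)}U^*$ with $|T^*|^{2(1-\alpha)}$, and I expect this to be the main obstacle, since it amounts to transporting the functional calculus across the partial isometry $U$. I would first establish the base identity $U|T|U^* = |T^*|$: writing $P = U^*U$ for the projection onto $\overline{\mathrm{Ran}}\,|T| = (\ker T)^\perp$, one has $P|T| = |T| = |T|P$, so that $(U|T|U^*)^2 = U|T|\,P\,|T|U^* = U|T|^2U^* = TT^* = |T^*|^2$; as $U|T|U^*$ is positive, uniqueness of the positive square root forces $U|T|U^* = |T^*|$. Iterating gives $U|T|^nU^* = |T^*|^n$ for every $n \in \mathbb{N}$, hence $U p(|T|) U^* = p(|T^*|)$ for every polynomial $p$ with $p(0)=0$; passing to $f(s)=s^{2(1-\alpha)}$ via the Weierstrass approximation theorem and continuity of the continuous functional calculus then yields the desired relation. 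The requirement $f(0)=0$ is exactly what neutralizes the discrepancy coming from $UU^*\neq I$, and it holds whenever $\alpha \in [0,1)$; the endpoint $\alpha = 1$ degenerates to the ordinary Cauchy--Schwarz inequality and may be treated separately.

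With $U|T|^{2(1-\alpha)}U^* = |T^*|^{2(1-\alpha)}$ in hand, substituting back into the displayed bound produces
\[
|\langle Tx, y \rangle|^2 \le \langle |T|^{2\alpha}x, x \rangle\,\langle |T^*|^{2(1-\alpha)}y, y \rangle,
\]
which is the assertion. The only delicate point throughout is the transfer identity for the partial isometry, and once the base case $U|T|U^* = |T^*|$ is secured the rest is routine functional-calculus bookkeeping.
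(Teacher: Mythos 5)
Your proof is correct. Note that the paper itself gives no proof of this lemma---it is quoted verbatim from \cite{Furuta}---and your argument (polar decomposition $T=U|T|$, the splitting $|T|=|T|^{1-\alpha}|T|^{\alpha}$, Cauchy--Schwarz, and the transfer identity $U|T|^{2(1-\alpha)}U^{*}=|T^{*}|^{2(1-\alpha)}$) is essentially the classical proof from that reference, with the delicate points handled properly: the base identity $U|T|U^{*}=|T^{*}|$ via uniqueness of positive square roots, the restriction to polynomials vanishing at $0$ (which is what compensates for $UU^{*}\neq I$), and the separate treatment of the endpoint $\alpha=1$, where the claim degenerates to ordinary Cauchy--Schwarz.
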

Our first theorem of this section follows as: 
	\begin{theorem} \label{th2}
		Let $T \in \mathcal{B}(\mathcal{H})$ and $t \in [0,1]$. Then
		\[ w_t^2(T) \leq (1-2t)^2 w^2(T) + (1-2t) w(T^2) + (1-t) \|T^*T + TT^*\|.\]
		
	\end{theorem}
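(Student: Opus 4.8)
The plan is to work directly from the identity $w_t(T) = w\big((1-2t)T^* + T\big)$ recorded in the introduction, so that $w_t^2(T) = \sup_{\|x\|=1}\big|\big\langle ((1-2t)T^*+T)x,x\big\rangle\big|^2$. Writing $z = \langle Tx,x\rangle$ and using $\langle T^*x,x\rangle = \overline z$, the scalar inside the modulus is $(1-2t)\overline z + z$, whose modulus squared expands (since $1-2t$ is real) to
\[
\big|(1-2t)\overline z + z\big|^2 = \big((1-2t)^2+1\big)|z|^2 + 2(1-2t)\,\Re(z^2).
\]
Thus it suffices to bound, uniformly over unit vectors $x$, the three pieces $(1-2t)^2|z|^2$, the lone $|z|^2$, and the cross term $2(1-2t)\Re(z^2)$, and then pass to the supremum.

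The first piece is immediate, $(1-2t)^2|z|^2 \le (1-2t)^2 w^2(T)$, and produces the leading term. For the lone $|z|^2$ I would apply the Cauchy--Schwarz inequality in the two forms $|\langle Tx,x\rangle|\le\|Tx\|$ and $|\langle Tx,x\rangle| = |\langle x,T^*x\rangle|\le \|T^*x\|$, average the two squared bounds, and use $\|Tx\|^2+\|T^*x\|^2 = \langle (T^*T+TT^*)x,x\rangle$ to get $|z|^2 \le \tfrac12\langle(T^*T+TT^*)x,x\rangle\le\tfrac12\|T^*T+TT^*\|$; this supplies half of the eventual $\|T^*T+TT^*\|$ coefficient.

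The crux is the cross term, and here Buzano's inequality (Lemma \ref{buzano}) is the right tool. Observing that $z^2 = \langle Tx,x\rangle\langle x,T^*x\rangle$, I would apply Lemma \ref{buzano} with $u=Tx$, $v=T^*x$, $e=x$, together with the identity $\langle Tx,T^*x\rangle = \langle T^2x,x\rangle$, to obtain $|z|^2 \le \tfrac12\big(|\langle T^2x,x\rangle| + \|Tx\|\|T^*x\|\big)$. Bounding $\Re(z^2)\le|z^2|=|z|^2$ and then using $|\langle T^2x,x\rangle|\le w(T^2)$ together with $\|Tx\|\|T^*x\|\le\tfrac12\langle(T^*T+TT^*)x,x\rangle\le\tfrac12\|T^*T+TT^*\|$ gives $2(1-2t)\Re(z^2)\le (1-2t)\,w(T^2) + \tfrac{1-2t}{2}\|T^*T+TT^*\|$. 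Adding the three estimates and noting that $\tfrac12 + \tfrac{1-2t}{2} = 1-t$ yields precisely the claimed inequality once the supremum over $\|x\|=1$ is taken.

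The delicate step I expect to be the main obstacle is exactly this cross-term estimate: passing from $\Re(z^2)$ to $|z|^2$ and then multiplying by the scalar $2(1-2t)$ is a genuine upper bound only when the factor $1-2t$ is nonnegative, so the chain as written is tailored to the range $t\le\tfrac12$. Keeping careful track of the sign of $1-2t$ (and of where Buzano is invoked versus where plain Cauchy--Schwarz suffices) is the one place demanding attention; the remaining manipulations are routine Cauchy--Schwarz and arithmetic--geometric mean bookkeeping.
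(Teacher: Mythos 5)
Your proof is correct on the range where it is valid and follows essentially the same route as the paper: expand the squared modulus of $(1-2t)\overline{z}+z$ where $z=\langle Tx,x\rangle$, control the cross term via Buzano's inequality (Lemma \ref{buzano}) together with $\langle Tx,T^*x\rangle=\langle T^2x,x\rangle$, and absorb the remaining terms into $w^2(T)$, $w(T^2)$ and $\tfrac12\|T^*T+TT^*\|$. The only real difference is cosmetic: for the lone $|z|^2$ you use plain Cauchy--Schwarz twice and average, whereas the paper invokes the mixed Schwarz inequality (Lemma \ref{mixed schwarz}) with $\alpha=\tfrac12$ followed by Lemma \ref{lemmasimon}; both routes land on the same bound $\tfrac12\langle(T^*T+TT^*)x,x\rangle$, and yours is the more elementary of the two.

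The sign restriction you flag at the end is a genuine one, but it is not a defect of your argument relative to the paper: the paper's proof makes the identical hidden assumption. Its first line bounds $|\langle((1-2t)T^*+T)x,x\rangle|$ by $(1-2t)|\langle T^*x,x\rangle|+|\langle Tx,x\rangle|$, which is the triangle inequality only when $1-2t\ge 0$, and it later multiplies Buzano's inequality by the factor $(1-2t)$, which again preserves the inequality only when $1-2t\ge 0$. In fact no bookkeeping can repair this, because the statement as printed for all $t\in[0,1]$ is false. Take $T=\left(\begin{smallmatrix}0&1\\0&0\end{smallmatrix}\right)$: then $w(T)=\tfrac12$, $T^2=0$, $\|T^*T+TT^*\|=1$, and $(1-2t)T^*+T=\left(\begin{smallmatrix}0&1\\1-2t&0\end{smallmatrix}\right)$, so that $w_t(T)=\tfrac{1+|1-2t|}{2}=t$ for $t\ge\tfrac12$. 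The claimed bound then reads $t^2\le\tfrac{(1-2t)^2}{4}+(1-t)=t^2-2t+\tfrac54$, which fails for every $t>\tfrac58$. So both your proof and the paper's establish the inequality exactly on $[0,\tfrac12]$, and your explicit acknowledgment of where the sign of $1-2t$ enters is a point in your write-up's favor rather than a gap.
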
	
		\begin{proof}
			Let $x \in \mathcal{B}(\mathcal{H})$ with $\|x\|=1$. Then we get, 
			\begin{eqnarray*}
			|\langle \left((1-2t)T^* + T\right)x, x \rangle | &\leq& (1-2t) |\langle T^*x, x \rangle| + |\langle Tx, x \rangle|\\
			\Rightarrow |\langle \left((1-2t)T^* + T\right)x, x \rangle |^2 &=& (1-2t)^2 |\langle Tx, x \rangle|^2 + |\langle Tx, x \rangle|^2\\ &+& 2(1-2t) |\langle Tx, x \rangle  \langle x, T^*x \rangle| \\
			&\leq& (1-2t)^2 |\langle Tx, x \rangle|^2 + \langle |T|x, x \rangle \langle |T^*|x, x \rangle\\& +& (1-2t) \left[|\langle Tx, T^*x \rangle| + \|Tx\|\|T^*x\| \right] \\ && \,\,\,\,\,\,\,\,\,\,\,\,\,\,\,\,\,\,\,\,\,\,\,\,\,\,\,\,\,\,\,\,\,\,\,\,\,\,\,\,\,\,\,\,\,\,\,\,\,\,\,\mbox{(using Lemma \ref{buzano} and \ref{mixed schwarz})}\\
			&\leq& (1-2t)^2 |\langle Tx, x \rangle|^2 + \frac{1}{2} \langle \left(|T|^2 + |T|^2 \right)x, x \rangle\\ &+& (1-2t) |\langle T^2x, x \rangle| + \frac{1}{2} (1-2t) \langle \left( |T|^2 + |T^*|^2\right) x, x \rangle \\
			&\leq& (1-2t)^2 w^2(T) + (1-2t)w(T^2)\\ &+& (1-t) \|T^*T + TT^*\|
			\end{eqnarray*}
		Now taking supremum over $x$ with $\|x\|=1$ we get our required bound.
		\end{proof}
		
		\begin{cor}
			If we take $t = \frac{1}{2}$ in Theorem \ref{th2} we get the inequality proved in\\ \cite[Th. 1]{F}, which says that \[w^2(T) \leq \frac{1}{2} \|T^*T + TT^*\|. \]
			So the inequality obtained in Theorem \ref{th2} generalizes \cite [Th. 2]{F}.
		\end{cor}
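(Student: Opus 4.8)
The plan is to specialize Theorem \ref{th2} at the single value $t=\tfrac12$ and simply read off the claimed bound; no new estimate is required. First I would record the two elementary facts about the weighted numerical radius at $t=\tfrac12$ that were already noted in the Introduction. Since $\Re_{1/2}(T)=\Re(T)$ and $\Im_{1/2}(T)=\Im(T)$, the weighted radius collapses to the classical one, $w_{1/2}(T)=w(T)$, and hence $w_{1/2}^2(T)=w^2(T)$. This identifies the left-hand side of the inequality in Theorem \ref{th2} with $w^2(T)$ when $t=\tfrac12$.

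Next I would substitute $t=\tfrac12$ directly into the right-hand side $(1-2t)^2\,w^2(T)+(1-2t)\,w(T^2)+(1-t)\,\|T^*T+TT^*\|$. The key observation is that the factor $1-2t$ vanishes at $t=\tfrac12$, so the first two summands — the ones carrying $w^2(T)$ and $w(T^2)$ — disappear simultaneously; this is precisely the algebraic reason the awkward middle term drops out. The surviving coefficient is $1-t=\tfrac12$, multiplying $\|T^*T+TT^*\|$.

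Combining the two computations yields $w^2(T)\le \tfrac12\,\|T^*T+TT^*\|$, which is exactly the upper bound of \cite[Th. 1]{F}; since Theorem \ref{th2} is valid for every $t\in[0,1]$ and reduces to this estimate at $t=\tfrac12$, it indeed generalizes \cite[Th. 2]{F}. There is essentially no obstacle beyond invoking the reduction $w_{1/2}(T)=w(T)$ correctly together with the vanishing of $1-2t$; both are immediate, so the corollary follows by direct substitution.
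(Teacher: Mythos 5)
Your proposal is correct and is exactly the argument the paper intends: the corollary follows by direct substitution of $t=\tfrac12$ into Theorem \ref{th2}, using $w_{1/2}(T)=w(T)$ so that the terms with factor $1-2t$ vanish and the coefficient $1-t=\tfrac12$ remains. The paper gives no further detail precisely because this substitution is the whole proof.
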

	Next we prove an inequality involving weighted norm.	
		
		\begin{lemma}\label{t-product}
			Let $T, S \in \mathcal{B}(\mathcal{H})$ and $t \in [0, 1]$, then 
			\[\|TS\|_t^2 \leq (2-4t + 4t^2) \|TS\|^2 + (1-2t) w((TS)^2) + (1-2t) w((S^*T^*)^2) . \]
		\end{lemma}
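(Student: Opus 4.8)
The plan is to write $A = TS$ and reduce the statement to an inequality about the single operator $B = (1-2t)A^{*} + A$, since by the definition of the weighted norm we have $\|TS\|_t = \|(1-2t)(TS)^{*} + TS\| = \|B\|$. I would then pass to $\|B\|^2 = \|B^{*}B\|$ via the $C^{*}$-identity, so that the whole problem becomes controlling the norm of one concrete self-adjoint operator. Expanding, and using that $1-2t$ is real, gives
\[
B^{*}B = (1-2t)^2 AA^{*} + A^{*}A + (1-2t)\bigl(A^2 + (A^{*})^2\bigr).
\]

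The first two summands are easy: the triangle inequality for the operator norm together with $\|AA^{*}\| = \|A^{*}A\| = \|A\|^2 = \|TS\|^2$ produces precisely the coefficient $(1-2t)^2 + 1 = 2 - 4t + 4t^2$ in front of $\|TS\|^2$. The step I regard as the crux is the cross term $A^2 + (A^{*})^2$. The observation to exploit is that this operator is self-adjoint, so its operator norm coincides with its numerical radius; then the triangle inequality for the numerical radius (recorded in the introduction, $w(X+Y) \le w(X)+w(Y)$) splits it cleanly into the two required pieces:
\[
\bigl\|A^2 + (A^{*})^2\bigr\| = w\bigl(A^2 + (A^{*})^2\bigr) \le w(A^2) + w\bigl((A^{*})^2\bigr) = w\bigl((TS)^2\bigr) + w\bigl((S^{*}T^{*})^2\bigr).
\]
Combining the three estimates and applying the supremum inherent in the operator norm yields exactly the claimed bound.

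The main obstacle is essentially bookkeeping around the sign of $1-2t$: the tidy argument above treats $(1-2t)$ as nonnegative, i.e. implicitly $t \le 1/2$, which matches the form in which the statement and Theorem \ref{th2} are written; for $t > 1/2$ one reads $(1-2t)$ as $|1-2t|$ in the cross term, and nothing essential changes. As an alternative that mirrors the vector-style argument of Theorem \ref{th2}, I could instead fix $x$ with $\|x\|=1$ and expand
\[
\|Bx\|^2 = (1-2t)^2\|A^{*}x\|^2 + \|Ax\|^2 + 2(1-2t)\,\Re\langle A^2 x, x\rangle,
\]
then rewrite $2\,\Re\langle A^2 x, x\rangle = \langle A^2 x, x\rangle + \langle (A^{*})^2 x, x\rangle$, bound the two squared-norm terms by $\|TS\|^2$ and each inner product by the corresponding numerical radius $w\bigl((TS)^2\bigr)$ and $w\bigl((S^{*}T^{*})^2\bigr)$, and finally take the supremum over unit vectors $x$. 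Either route reaches the stated inequality, with the self-adjointness of $A^2 + (A^{*})^2$ being the one genuinely load-bearing idea.
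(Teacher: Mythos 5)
Your primary argument is correct and is a genuinely different route from the paper's; your ``alternative'' at the end is, almost verbatim, the paper's own proof. The paper works at the vector level: for a unit vector $x$ it expands $\left\|\left((1-2t)(TS)^*+TS\right)x\right\|^2$ as an inner product, obtaining $(1-2t)^2\|(TS)^*x\|^2+\|TSx\|^2+(1-2t)\left[\langle (TS)^*x,TSx\rangle+\langle TSx,(TS)^*x\rangle\right]$, recognizes the cross terms as $\langle (S^*T^*)^2x,x\rangle$ and $\langle (TS)^2x,x\rangle$, bounds termwise, and takes the supremum over $x$. Your main route stays at the operator level: the $C^*$-identity $\|B\|^2=\|B^*B\|$, the algebraic expansion of $B^*B$, and the load-bearing observation that $A^2+(A^*)^2$ is self-adjoint, so its norm equals its numerical radius and the $w$-triangle inequality splits it into $w((TS)^2)+w((S^*T^*)^2)$. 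What your version buys is a coordinate-free argument with no supremum bookkeeping --- the lemma becomes pure algebra plus standard norm facts; what it costs is the (standard) fact that norm and numerical radius coincide for self-adjoint operators, which the paper's vector-level computation never needs, since it estimates $|\langle (TS)^2x,x\rangle|$ directly by $w((TS)^2)$ before taking the supremum. Finally, the sign caveat you flag is real, and it afflicts the paper's proof equally: for $t>1/2$ both arguments only yield the inequality with $|1-2t|$ in front of the numerical-radius terms, and the lemma as literally stated then fails --- e.g.\ for $t=1$, $T=I$ and $S$ the diagonal matrix with entries $i,-i$, the left side equals $4$ while the right side equals $2\|S\|^2-2w(S^2)=0$. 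So your explicit acknowledgment of this point is a strength of the write-up, not a gap.
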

		\begin{proof}
			Let $x \in \mathcal{H}$ with $\|x\|=1$. By simple calculation we get,
			\begin{eqnarray*}
			 \left\|\left((1-2t)(TS)^* + TS \right)\right)x\|^2
			&=& \left \langle [(1-2t)(TS)^* + TS ]x, [(1-2t)(TS)^* + TS ]x \right \rangle\\
			&=& (1-2t)^2 \|(TS)^*x\|^2 + (1-2t) \langle (TS)^*x, TSx \rangle\\ &&+ (1-2t) \langle TSx, (TS)^*x \rangle + \|TSx\|^2\\
			&\leq& (2-4t+4t^2) \|TS\|^2 + (1-2t) w((TS)^2) \\&&+  (1-2t) w((S^*T^*)^2)
			\end{eqnarray*}
		Now taking supremum over $x$ with $\|x\|=1$ we get our required inequality.
		\end{proof}
	Next we give an upper bound for weighted numerical radius which improves\\ the second inequality in (\ref{basic}).
		\begin{theorem}\label{th3}
			Let $T \in \mathcal{B}(\mathcal{H})$ and $t \in [0, 1]$. Then \[w_t^2(T) \leq (1-2t + 2t^2) \|TT^* + T^*T\| + (1-2t) w\left(T^2 + (T^*)^2\right). \]
		\end{theorem}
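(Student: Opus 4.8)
The plan is to bypass the Buzano and mixed-Schwarz route used for Theorem \ref{th2} and instead push everything through the Cartesian decomposition, which produces the two operators $TT^*+T^*T$ and $T^2+(T^*)^2$ on the nose. Fix a unit vector $x$ and set $a=\langle Tx,x\rangle$, so $\langle T^*x,x\rangle=\overline a$ and
\[
\langle\bigl((1-2t)T^*+T\bigr)x,x\rangle=(1-2t)\overline a+a .
\]
Since $\Re(T)$ and $\Im(T)$ are self-adjoint, $a=\langle\Re(T)x,x\rangle+i\langle\Im(T)x,x\rangle$ has real and imaginary parts $\langle\Re(T)x,x\rangle$ and $\langle\Im(T)x,x\rangle$; substituting and expanding the modulus squared I would obtain
\[
\bigl|\langle\bigl((1-2t)T^*+T\bigr)x,x\rangle\bigr|^2
=4(1-t)^2\langle\Re(T)x,x\rangle^2+4t^2\langle\Im(T)x,x\rangle^2 .
\]

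First I would tame the two squared scalars by the corresponding operator squares. Because $\Re(T)$ and $\Im(T)$ are self-adjoint, Cauchy--Schwarz gives $\langle\Re(T)x,x\rangle^2\le\|\Re(T)x\|^2=\langle\Re(T)^2x,x\rangle$ and the analogous bound for $\Im(T)$, so the previous display is at most
\[
4(1-t)^2\langle\Re(T)^2x,x\rangle+4t^2\langle\Im(T)^2x,x\rangle .
\]

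The key algebraic step is to record the identities $\Re(T)^2+\Im(T)^2=\tfrac12(TT^*+T^*T)$ and $\Re(T)^2-\Im(T)^2=\tfrac12(T^2+(T^*)^2)$, which follow by squaring $\Re(T)=\tfrac12(T+T^*)$ and $\Im(T)=\tfrac1{2i}(T-T^*)$. Solving for $\Re(T)^2$ and $\Im(T)^2$ and using the coefficient bookkeeping $4(1-t)^2+4t^2=4(1-2t+2t^2)$ and $4(1-t)^2-4t^2=4(1-2t)$, the bound rewrites as
\[
(1-2t+2t^2)\langle(TT^*+T^*T)x,x\rangle+(1-2t)\langle(T^2+(T^*)^2)x,x\rangle .
\]

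Finally I would take the supremum over unit vectors $x$. The operator $TT^*+T^*T$ is positive and carries a positive coefficient, so its term is at most $(1-2t+2t^2)\|TT^*+T^*T\|$; the operator $T^2+(T^*)^2$ is self-adjoint but indefinite, so its term is controlled by $|\langle(T^2+(T^*)^2)x,x\rangle|\le w(T^2+(T^*)^2)$, yielding the stated inequality. The hard part will be precisely this last term: since $T^2+(T^*)^2$ is not sign-definite and $1-2t$ changes sign at $t=\tfrac12$, one cannot absorb it into an operator norm and must pass to the numerical radius; in full generality the coefficient is really $|1-2t|$, which coincides with $1-2t$ on $t\in[0,\tfrac12]$ in accordance with the statement. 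Getting the two operator identities and matching the coefficients correctly is the only real subtlety; the rest is routine expansion and estimation.
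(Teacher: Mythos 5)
Your proof is correct and follows a genuinely different, more elementary route than the paper's. The paper applies the generalized mixed Schwarz inequality (Lemma \ref{mixed schwarz} with $\alpha=\tfrac12$) to $A=(1-2t)T^*+T$, then the power inequality for positive operators (Lemma \ref{lemmasimon}) to pass from $\langle |A|x,x\rangle\langle |A^*|x,x\rangle$ to $\tfrac12\langle(|A|^2+|A^*|^2)x,x\rangle$, and finally expands $\tfrac12(A^*A+AA^*)=(1-2t+2t^2)(TT^*+T^*T)+(1-2t)\left(T^2+(T^*)^2\right)$. You instead work at the scalar level: with $a=\langle Tx,x\rangle$ you have the exact identity $|(1-2t)\bar a+a|^2=4(1-t)^2(\Re a)^2+4t^2(\Im a)^2$, then Cauchy--Schwarz for the self-adjoint operators $\Re(T)$ and $\Im(T)$, and the identities $\Re(T)^2\pm\Im(T)^2=\tfrac12(TT^*+T^*T)$, $\tfrac12\left(T^2+(T^*)^2\right)$ respectively. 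Both arguments land on exactly the same penultimate expression $(1-2t+2t^2)\langle(TT^*+T^*T)x,x\rangle+(1-2t)\langle(T^2+(T^*)^2)x,x\rangle$, so the final supremum step is shared. What your route buys is economy of tools (nothing beyond Cauchy--Schwarz; no Furuta mixed Schwarz inequality, no H\"older--McCarthy type lemma), and the fact that your first step is an equality rather than an estimate makes the coefficient bookkeeping transparent.

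The sign caveat you raise at the end is not a defect of your argument relative to the paper's: the paper's proof commits exactly the step you decline to make, namely bounding $(1-2t)\langle(T^2+(T^*)^2)x,x\rangle$ above by $(1-2t)\,w\left(T^2+(T^*)^2\right)$, which is invalid for $t>\tfrac12$ because the (real) inner product can be negative while the numerical radius is nonnegative. In fact the theorem as stated fails for $t>\tfrac12$: take $T=iI$ and $t=1$; then $w_1^2(T)=w^2(2iI)=4$, while the right-hand side equals $\|2I\|-w(-2I)=2-2=0$. So your proof, valid on $t\in[0,\tfrac12]$ and valid for all $t$ once the coefficient is written as $|1-2t|$, is exactly as strong as the statement can be, and your closing observation amounts to a needed correction of the theorem on $(\tfrac12,1]$ rather than a gap in your argument.
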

	\begin{proof}
		Let $x \in \mathcal{H}$ with $\|x\| =1$. By using Lemma \ref{mixed schwarz} we get,
		\begin{eqnarray*}
		|\langle [(1-2t)T^* + T]x, x \rangle|^2 &\leq& \langle |(1-2t)T^* + T|x, x\rangle \langle |(1-2t)T + T^*|x, x \rangle\\
		&\leq& \frac{1}{2} \left[ \left\langle |(1-2t)T^* + T|^2x,x \right\rangle + \left\langle |(1-2t)T + T^*|^2x, x \right \rangle \right]\\ && \,\,\,\,\,\,\,\,\,\,\,\,\,\,\,\,\,\,\,\,\,\,\,\,\,\,\,\,\,\,\,\,\,\,\,\,\,\,\,\,\,\,\,\,\,\,\,\,\,\,\,\,\,\,\,\,\,\,\,\,\,\,\,\,\,\,\,\,\,\,\,\,\,\,\,\,\,\,(\mbox{using Lemma \ref{lemmasimon}})\\
		&=& \frac{1}{2}\left[(1-2t)^2 \langle (TT^* + T^*T)x, x \rangle +  \langle (TT^* + T^*T)x, x \rangle\right] \\ &&+ (1-2t) \langle (T^2 + (T^*)^2)x, x \rangle \\ 
		&\leq& (1-2t + 2t^2) \|TT^* + T^*T\| + (1-2t) w \left(T^2 + (T^*)^2 \right)
		\end{eqnarray*}
	Taking supremum over $x$ with $\|x\|=1$ we get our desired inequality.
	\end{proof}	
		\begin{remark}
			Let $T \in \mathcal{B}(\mathcal{H})$ and $t \in \frac{1}{2}$. Then from Theorem \ref{th3} we get
			\begin{eqnarray*}
				w_t^2(T) &\leq& (1-2t + 2t^2) \|TT^* + T^*T\| + (1-2t) \left\|T^2 + (T^*)^2\right\|\\ 
				&\leq& (1-2t + 2t^2) 2\|T\|^2 + (1-2t) 2\|T\|^2\\&& \,\,\,\,(\mbox{using inequality~ (\ref{basic}) and numerical radius power inequality})\\ &=& 4(1-2t + t^2) \|T\|^2 .
			\end{eqnarray*}
		So clearly for $t = \frac{1}{2}$, the inequality improves the second inequality of (\ref{basic}).
		\end{remark}
	In the following theorem we give another improvement of second inequality of (\ref{basic}) for $t = \frac{1}{2}.$
	\begin{theorem}\label{th4}
		Let $T \in \mathcal{B}(\mathcal{H})$ and $0\leq t \leq 1$. Then \[w_t^2(T) \leq (2-4t+4t^2)w^2(T) + (1-2t) w(T^2) + \frac{1}{2}(1-2t) \|TT^* + T^*T\|.\]
		
	\end{theorem}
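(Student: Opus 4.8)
The plan is to mirror the structure of the proof of Theorem \ref{th2}, starting from the pointwise expansion of $|\langle[(1-2t)T^*+T]x,x\rangle|^2$ for a unit vector $x$, but now routing the cross term and the $|\langle Tx,x\rangle|^2$ term through different estimates so that the output involves $w^2(T)$ (rather than a single $w(T)$) together with $w(T^2)$ and $\|TT^*+T^*T\|$. First I would write, using the triangle inequality and $|\langle T^*x,x\rangle|=|\langle Tx,x\rangle|$,
\begin{equation*}
|\langle[(1-2t)T^*+T]x,x\rangle|^2 = (1-2t)^2|\langle Tx,x\rangle|^2 + |\langle Tx,x\rangle|^2 + 2(1-2t)\,|\langle Tx,x\rangle\langle x,T^*x\rangle|.
\end{equation*}

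Next I would estimate the three pieces separately. The two terms $(1-2t)^2|\langle Tx,x\rangle|^2$ and $|\langle Tx,x\rangle|^2$ I would bound, after taking the eventual supremum, by $(1-2t)^2 w^2(T)$ and $w^2(T)$ respectively; note $(1-2t)^2+1$ is \emph{not} the target coefficient $(2-4t+4t^2)$, so one of these two copies of $|\langle Tx,x\rangle|^2$ must instead be spent elsewhere. The natural move is to apply the Buzano inequality (Lemma \ref{buzano}) to the cross term $|\langle Tx,x\rangle\langle x,T^*x\rangle|$, writing it with the unit vector $e=x$ so that
\begin{equation*}
|\langle Tx,x\rangle\langle x,T^*x\rangle| \leq \tfrac{1}{2}\bigl(|\langle Tx,T^*x\rangle| + \|Tx\|\,\|T^*x\|\bigr) = \tfrac{1}{2}\bigl(|\langle T^2x,x\rangle| + \|Tx\|\,\|T^*x\|\bigr),
\end{equation*}
which supplies the $(1-2t)w(T^2)$ term after taking the supremum, and leaves the product $\|Tx\|\,\|T^*x\|$ to be controlled.

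For the remaining product I would use $\|Tx\|\,\|T^*x\|\le\tfrac12(\|Tx\|^2+\|T^*x\|^2)=\tfrac12\langle(T^*T+TT^*)x,x\rangle$, giving a contribution $\tfrac12(1-2t)\langle(TT^*+T^*T)x,x\rangle\le\tfrac12(1-2t)\|TT^*+T^*T\|$ after the supremum, which matches the last term in the claim. Reconciling the coefficient of $w^2(T)$ is the delicate bookkeeping step: to reach $(2-4t+4t^2)$ from $(1-2t)^2+1 = 2-4t+4t^2$ one sees these in fact already agree, so no term needs to be rerouted through the Schwarz/Buzano machinery—the two $|\langle Tx,x\rangle|^2$ contributions together give exactly $(2-4t+4t^2)\,w^2(T)$.

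Finally I would assemble the three bounds, take the supremum over unit $x$, and use the subadditivity of the supremum to land on the stated right-hand side. \textbf{The main obstacle} I anticipate is the coefficient accounting rather than any single hard inequality: one must be careful that the Buzano step is applied to the cross term only (not to the squared diagonal terms), and that the factor $2(1-2t)$ from the cross term distributes as $(1-2t)$ onto $w(T^2)$ and $\tfrac12(1-2t)$ onto $\|TT^*+T^*T\|$. Every inequality used—Buzano (Lemma \ref{buzano}), and the elementary AM–GM bound on $\|Tx\|\,\|T^*x\|$—is either already available in the excerpt or completely routine, so the proof should close cleanly once the constants are tracked.
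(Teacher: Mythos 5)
Your proposal is correct and follows essentially the same route as the paper's proof: triangle inequality, expansion of the square, Buzano's inequality (Lemma \ref{buzano}) applied to the cross term $|\langle Tx,x\rangle\langle x,T^*x\rangle|$, the AM--GM bound $\|Tx\|\,\|T^*x\|\le\tfrac12\langle(T^*T+TT^*)x,x\rangle$, and a supremum over unit vectors. The only hiccup is your mid-proof claim that $(1-2t)^2+1$ differs from $2-4t+4t^2$ (they are identically equal, as you later notice), but since you correct this before assembling the bound, the argument closes exactly as in the paper.
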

\begin{proof}
	Let $x \in \mathcal{H}$ with $\|x\|=1$. Then using Lemma \ref{buzano} we get,
	\begin{eqnarray*}
	\left|\langle ((1-2t)T^* + T)x, x \rangle \right|^2  &\leq& \left((1-2t) |\langle T^*x, x \rangle| + |\langle Tx, x \rangle| \right)^2\\
	&=& (1-2t)^2 |\langle Tx, x \rangle|^2 + |\langle Tx, x \rangle|^2 \\&&+ 2(1-2t) |\langle Tx, x \rangle \langle x, T^*x \rangle|\\
	&\leq& (2-4t+4t^2) |\langle Tx, x \rangle|^2 + (1-2t) \left[\langle Tx, T^*x \rangle\right]\\&& + \|Tx\|\|T^*x\| \\
	&\leq& (2-4t + 4t^2)w^2(T) + (1-2t) w(T^2)\\ && + \frac{1}{2}(1-2t) \|TT^* + T^*T\|.
 	\end{eqnarray*}
 Now taking supremum over $x$ with $\|x\|=1$ we get our desired inequality.
\end{proof}
\begin{remark}
	Let $T \in \mathcal{B}(\mathcal{H})$ and $t \in [0,1]$. Then using second inequality\\ of (\ref{basic}) and power inequality we get
	\begin{eqnarray*}
		w_t^2(T) &\leq& (2-4t+4t^2) \|T\|^2 + (1-2t) \|T\|^2 + \|T\|^2\\
		&=& 4(1-2t + t^2) \|T\|^2.
	\end{eqnarray*}
Clearly inequality obtained in Theorem \ref{th4}~ improves the second inequality in (\ref{basic})\\ for $t =\frac{1}{2}$.
\end{remark}
Next we state Generalized Polarization Identity which will be useful to prove our final weighted numerical radius inequality of this section.
\begin{lemma}\label{Polarization}
	Let $T \in \mathcal{B}(\mathcal{H})$ and $x, y \in \mathcal{H}$. Then 
	\begin{eqnarray*}
	\langle Tx, y \rangle &=& \frac{1}{4} \left[\langle T(x+y), (x+y) \rangle - \langle T(x-y), (x-y) \right]\\
	&& + \frac{i}{4}\left[ \langle T(x+iy), (x+iy) \rangle - \langle T(x-iy), (x-iy) \rangle \right].
	\end{eqnarray*}
\end{lemma}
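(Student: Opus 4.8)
The plan is to prove this purely by expanding the four sesquilinear forms on the right-hand side and collecting like terms, since the generalized polarization identity is a direct consequence of the sesquilinearity of the map $(u,v) \mapsto \langle Tu, v \rangle$, which is linear in the first slot and conjugate-linear in the second. No structural hypothesis on $T$ is needed (in particular $T$ need not be self-adjoint), so the same computation works verbatim for every $T \in \mathcal{B}(\mathcal{H})$. To keep the bookkeeping compact I would introduce the shorthand $a = \langle Tx, x \rangle$, $b = \langle Tx, y \rangle$, $c = \langle Ty, x \rangle$, $d = \langle Ty, y \rangle$ and express every term through $a,b,c,d$.

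First I would record the two real expansions. Bilinearity gives $\langle T(x+y), (x+y) \rangle = a + b + c + d$ and $\langle T(x-y), (x-y) \rangle = a - b - c + d$, so their difference is $2(b+c)$, and hence the first bracket on the right-hand side contributes $\tfrac14 \cdot 2(b+c) = \tfrac12\big(\langle Tx, y \rangle + \langle Ty, x \rangle\big)$.

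Next I would treat the imaginary expansions, where the only genuine care is required. Because the inner product is conjugate-linear in the second argument, one has $\langle Tx, iy \rangle = -i\,\langle Tx, y \rangle$, while $\langle T(iy), x \rangle = i\,\langle Ty, x \rangle$ and $\langle T(iy), iy \rangle = i\bar i\,\langle Ty, y \rangle = \langle Ty, y \rangle$. Consequently $\langle T(x+iy), (x+iy) \rangle = a - ib + ic + d$ and $\langle T(x-iy), (x-iy) \rangle = a + ib - ic + d$, whose difference is $-2i(b-c)$. Multiplying by $\tfrac{i}{4}$ turns this into $\tfrac{i}{4}\cdot(-2i)(b-c) = \tfrac12(b-c) = \tfrac12\big(\langle Tx, y \rangle - \langle Ty, x \rangle\big)$.

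Finally, adding the contributions of the two brackets cancels the $\langle Ty, x \rangle$ terms and doubles the $\langle Tx, y \rangle$ terms, leaving exactly $\langle Tx, y \rangle$, which is the claimed identity. I do not expect any real obstacle here: the entire content is tracking the complex conjugate on the scalar $i$ sitting in the second slot of the inner product. Once the convention (linear in the first argument, conjugate-linear in the second) is fixed, the identity is forced and the proof is pure algebraic verification; the only pitfall to guard against is an inadvertent sign error from that conjugation.
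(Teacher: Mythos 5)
Your proof is correct: with the convention that the inner product is linear in the first slot and conjugate-linear in the second, your expansions $\langle T(x\pm y),x\pm y\rangle = a \pm b \pm c + d$ and $\langle T(x\pm iy),x\pm iy\rangle = a \mp ib \pm ic + d$ are right, and the two brackets contribute $\tfrac12(b+c)$ and $\tfrac12(b-c)$ respectively, summing to $\langle Tx,y\rangle$. Note that the paper states this generalized polarization identity without any proof at all (it is invoked as a known fact), so your direct sesquilinearity computation is not an alternative route but simply the standard verification that the paper omits; it is complete and needs no adjustment.
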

\begin{theorem}
	Let $T \in \mathcal{B}(\mathcal{H})$ and $t \in [0,1]$ then 
	\[w_t(T) \leq (1-t)\left(\|T\| + w(\tilde{T})\right). \]
\end{theorem}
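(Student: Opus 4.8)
The plan is to reduce this weighted inequality to Yamazaki's classical inequality $w(T) \le \frac{1}{2}\big(\|T\| + w(\tilde{T})\big)$, which is already recorded in the introduction. The bridge is the identity $w_t(T) = w\big((1-2t)T^* + T\big)$ noted at the start of the paper, since it re-expresses the weighted numerical radius as an ordinary numerical radius of an explicit combination of $T$ and $T^*$, and the ordinary numerical radius is a genuine norm on $\mathcal{B}(\mathcal{H})$.

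First I would bound $w_t(T)$ purely in terms of $w(T)$. Starting from $w_t(T) = w\big((1-2t)T^* + T\big)$, I would use the triangle inequality and positive homogeneity of the numerical radius together with the elementary fact $w(T^*) = w(T)$ (immediate from $\langle T^*x,x\rangle = \overline{\langle Tx,x\rangle}$, so that $|\langle T^*x,x\rangle| = |\langle Tx,x\rangle|$). This yields
\[ w_t(T) \le |1-2t|\,w(T^*) + w(T) = \big(|1-2t| + 1\big)\,w(T). \]
For $t \in [0,\tfrac{1}{2}]$ we have $|1-2t| = 1-2t$, so the coefficient collapses to $(1-2t)+1 = 2(1-t)$, giving the clean intermediate estimate $w_t(T) \le 2(1-t)\,w(T)$.

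Then I would invoke Yamazaki's inequality to bring in the Aluthge transform: substituting $w(T) \le \frac{1}{2}\big(\|T\| + w(\tilde{T})\big)$ into the previous line produces
\[ w_t(T) \le 2(1-t)\cdot \tfrac{1}{2}\big(\|T\| + w(\tilde{T})\big) = (1-t)\big(\|T\| + w(\tilde{T})\big), \]
which is exactly the asserted bound. At $t=\tfrac12$ both sides reduce to Yamazaki's statement, confirming that the theorem is the correct weighted generalization.

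The main obstacle is the sign of $1-2t$. The desired constant $2(1-t)$ emerges only because $|1-2t| = 1-2t$ when $t \le \tfrac{1}{2}$; for $t > \tfrac{1}{2}$ the identical computation gives the coefficient $|1-2t|+1 = 2t$, and since $2t > 2(1-t)$ there, the stated bound does not follow from the quick route. To handle the full range one could instead work from the supremum form $w_t(T) = \sup_\theta \|\Re_t(e^{i\theta}T)\|$ and use the decomposition $\Re_t(e^{i\theta}T) = \Re(e^{i\theta}T) + (2t-1)\,i\,\Im(e^{i\theta}T)$ together with $\sup_\theta\|\Re(e^{i\theta}T)\| = \sup_\theta\|\Im(e^{i\theta}T)\| = w(T)$; but this reproduces the same coefficient $1+|2t-1|$, so the sign issue at $t>\tfrac12$ is the genuine delicate point, and the triangle-inequality argument above is the intended line of proof.
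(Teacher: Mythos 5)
Your argument is correct on $[0,\tfrac12]$, and it takes a genuinely different route from the paper's. The paper does not invoke Yamazaki's inequality as a known result; it re-runs Yamazaki's proof in the weighted setting: writing $T=U|T|$, it expands $\Re\langle e^{i\theta}[(1-2t)T^*+T]x,x\rangle$ via the generalized polarization identity, discards the two resulting negative terms, and uses $\|SS^*\|=\|S^*S\|$ to turn $(e^{\pm i\theta}+U)|T|(e^{\mp i\theta}+U^*)$ into $2\left(|T|+\Re(e^{\mp i\theta}\tilde{T})\right)$, arriving at the bound $\frac{1-2t}{2}\left(\|T\|+w(\tilde{T})\right)+\frac12\left(\|T\|+w(\tilde{T})\right)=(1-t)\left(\|T\|+w(\tilde{T})\right)$. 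Your route --- $w_t(T)=w((1-2t)T^*+T)\le (1+|1-2t|)\,w(T)=2(1-t)\,w(T)$ for $t\le\tfrac12$, followed by Yamazaki's inequality --- is shorter and modular, at the cost of using Yamazaki as a black box; the paper's computation is self-contained and reproves Yamazaki's result at $t=\tfrac12$ rather than citing it.

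The obstruction you flag at $t>\tfrac12$ is real, but it is not a defect of your method: the paper's proof has exactly the same hidden restriction, and the theorem as stated is actually false on $(\tfrac12,1]$. In the paper's chain of inequalities, dropping the term $-\frac{1-2t}{4}\langle(e^{i\theta}-U)|T|(e^{-i\theta}-U^*)x,x\rangle$ (legitimate because the operator, of the form $A|T|A^*$, is positive) and bounding $\frac{1-2t}{4}\langle\,\cdot\,x,x\rangle$ by $\frac{1-2t}{4}\|\cdot\|$ both require $1-2t\ge 0$; redoing that computation for $t>\tfrac12$ produces the constant $t$, not $1-t$. Concretely, take $T=\begin{pmatrix}0&1\\0&0\end{pmatrix}$, so that $\tilde{T}=0$ and $\|T\|=1$. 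Then $w_t(T)=w\begin{pmatrix}0&1\\1-2t&0\end{pmatrix}=\frac{1+|1-2t|}{2}$, which equals $1-t$ for $t\le\tfrac12$ (showing both proofs give a sharp bound there) but equals $t>1-t$ for $t>\tfrac12$. So your proof establishes the statement on exactly the range where it is true, and no argument can close the remaining gap; the honest fix is to restrict the theorem to $t\in[0,\tfrac12]$, or to state the bound as $w_t(T)\le \max\{t,1-t\}\left(\|T\|+w(\tilde{T})\right)$, which your triangle-inequality argument proves for all $t\in[0,1]$ and which the example above shows is sharp.
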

		\begin{proof}
			Let $x \in \mathcal{H}$ with $\|x\|=1$. Assume that $T = U|T|$ be the polar decomposition of $T$. Then for every $\theta \in \mathbb{R}$ we have,
			\begin{eqnarray*}
			\Re \langle e^{i\theta} \left[(1-2t)T^* + T \right]x, x \rangle &=& (1-2t) \Re \langle e^{i\theta} T^*x, x \rangle + \Re \langle e^{i\theta} Tx, x\rangle\\
			&=& (1-2t) \Re \langle e^{i\theta} |T|U^*x, x \rangle + \Re \langle e^{i\theta} U|T|x, x\rangle\\
			&=& (1-2t) \Re \langle e^{-i\theta} |T|x, U^*x \rangle + \Re \langle e^{i\theta} |T|x, U^*x\rangle\\&&\,\,\,\,\,\,\,\,\,\,\,\,\,\,\,\,\,\,\,\,\,\,\,\,\,\,\,\,\,\,\,\,\,\,\,\,\,\,\,\,\,\,\,\,\,\,\,\,\,\,\,\,\,\,\,\,\,\,\,\,\,\,\,\,\,\,\,\,\,\,\,\,\,\,\,\,\,\,\,\,\ (\mbox{since $\Re z = \Re \overline{z}$})\\
		&=&	\frac{1-2t}{4} \langle |T|(e^{-i\theta} + U^*)x, (e^{-i\theta} + U^*)x \rangle\\&&-  \frac{1-2t}{4}\langle |T|(e^{-i\theta} - U^*)x, (e^{-i\theta} - U^*)x \rangle\\&& + \frac{1}{4}\langle |T|(e^{i\theta} + U^*)x, (e^{i\theta} + U^*)x \\&&- \langle |T| (e^{i\theta} - U^*)x, (e^{i\theta} - U^*)x ~~  (\mbox{using Lemma \ref{Polarization}})\\
		&=& \frac{1-2t}{4}\langle (e^{i\theta} + U) |T| (e^{-i\theta} + U^*)x, x \rangle \\&&- \frac{1-2t}{4} \langle (e^{i\theta} - U) |T| (e^{-i\theta} - U^*) x, x \rangle \\ &&+ \frac{1}{4} \langle (e^{-i\theta} + U) |T| (e^{i\theta} + U^*)x, x \rangle \\ &&- \frac{1}{4}  \langle (e^{-i\theta} - U) |T| (e^{i\theta} - U^*)x, x \rangle\\
		&\leq& \frac{1-2t}{4} \left\|(e^{i\theta} + U) |T| (e^{-i\theta} + U^*) \right\| \\ &&+ \frac{1}{4} \left\|(e^{-i\theta} + U) |T| (e^{i\theta} + U^*) \right\|\\
		&=& \frac{1-2t}{4} \left\||T|^{1/2} (e^{-i\theta} + U^*)(e^{i\theta} +U) |T|^{1/2} \right\| \\ && + \frac{1}{4} \left\||T|^{1/2} (e^{i\theta} +U^*) (e^{-i\theta} +U) |T|^{1/2} \right\|\\&&\,\,\,\,\,\,\,\,\,\,\,\,\,\,\,\,\,\,\,\,\,\,\,\,\,\ (\mbox{using the fact that $\|SS^*\| = \|S^*S\|$})
			\end{eqnarray*}
	\begin{eqnarray*}
		&=& \frac{1-2t}{4} \left\|2|T| + e^{-i\theta} |T|^{1/2} U |T|^{1/2} + e^{i\theta} |T|^{1/2} U^* |T|^{1/2} \right\|\\ && + \frac{1}{4} \left\|2|T| + e^{i\theta} |T|^{1/2} U |T|^{1/2} + e^{-i\theta} |T|^{1/2} U^* |T|^{1/2} \right\|\\
		&=& \frac{1-2t}{2} \left\||T| + \Re (e^{-i\theta} \tilde{T}) \right\| + \frac{1}{2} \left\||T| + \Re (e^{i\theta} \tilde{T}) \right\|\\
		&\leq& \frac{1-2t}{2} \left(\|T\| + w(\tilde{T}) \right) + \frac{1}{2} \left(\|T\| + w(\tilde{T}) \right)\\
		&=&  (1-t)\left(\|T\| + w(\tilde{T})\right)
			\end{eqnarray*}
		\end{proof}
		
	\begin{cor}
		If we take $t = \frac{1}{2}$ then we will get the following famous result \\proved by Yamazaki \cite{Yamazaki}, for $T \in \mathcal{B}(\mathcal{H})$ \[w(T) \leq \frac{1}{2} \left(\|T\| + w(\tilde{T}) \right) .\]
	\end{cor}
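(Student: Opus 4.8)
The plan is to work from the identity $w_t(T)=w\big((1-2t)T^{*}+T\big)$ together with the standard description $w(A)=\sup_{\theta\in\mathbb{R}}\sup_{\|x\|=1}\Re\langle e^{i\theta}Ax,x\rangle$. Accordingly I would fix $\theta\in\mathbb{R}$ and a unit vector $x$, and seek a bound of the form $\Re\langle e^{i\theta}[(1-2t)T^{*}+T]x,x\rangle\le (1-t)\big(\|T\|+w(\tilde{T})\big)$ that is uniform in $\theta$ and $x$; taking the supremum then closes the argument.

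The first step is to insert the polar decomposition $T=U|T|$, so that $T^{*}=|T|U^{*}$, and to split the quadratic form into its $(1-2t)$-weighted $T^{*}$-part and its $T$-part. Since $|T|$ is self-adjoint and $\Re z=\Re\overline{z}$, both pieces can be rewritten as real parts of $\langle|T|(e^{-i\theta}x),U^{*}x\rangle$ and $\langle|T|(e^{i\theta}x),U^{*}x\rangle$. I would then apply the real part of the polarization identity (Lemma \ref{Polarization}) to each, converting it into a difference of two quadratic forms of $|T|$ evaluated at vectors of the shape $(e^{-i\theta}\pm U^{*})x$ and $(e^{i\theta}\pm U^{*})x$, and move the adjoint across so that each such form appears as $\langle(e^{i\theta}\pm U)|T|(e^{-i\theta}\pm U^{*})x,x\rangle$ (and its conjugate-$\theta$ analogue).

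The key structural point is that every ``plus'' combination $(e^{i\theta}+U)|T|(e^{-i\theta}+U^{*})$ factors as $SS^{*}$ with $S=(e^{i\theta}+U)|T|^{1/2}$, hence is a positive operator, and similarly for the ``minus'' combinations. Because the positive minus-terms enter with nonpositive coefficients, they may be discarded, while the surviving positive terms are bounded above by their operator norms. Using $\|SS^{*}\|=\|S^{*}S\|$ I would compute $S^{*}S=|T|^{1/2}(e^{-i\theta}+U^{*})(e^{i\theta}+U)|T|^{1/2}$; expanding and invoking the polar-decomposition identity $|T|^{1/2}U^{*}U|T|^{1/2}=|T|$ (valid because $U^{*}U$ is the orthogonal projection onto $(\ker|T|)^{\perp}$, since $\ker U=\ker|T|$), this collapses to $2\big(|T|+\Re(e^{-i\theta}\tilde{T})\big)$ for the first group and to $2\big(|T|+\Re(e^{i\theta}\tilde{T})\big)$ for the second.

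It then remains to estimate $\big\||T|+\Re(e^{i\theta}\tilde{T})\big\|\le \||T|\|+\|\Re(e^{i\theta}\tilde{T})\|\le \|T\|+w(\tilde{T})$, using $\||T|\|=\|T\|$ and $\|\Re(e^{i\theta}\tilde{T})\|\le \sup_{\phi}\|\Re(e^{i\phi}\tilde{T})\|=w(\tilde{T})$. Collecting the two surviving contributions, with respective coefficients $\tfrac{1-2t}{2}$ and $\tfrac12$, produces the factor $\tfrac{1-2t}{2}+\tfrac12=1-t$, and the supremum over $\theta$ and $x$ yields the stated inequality. I expect the principal obstacle to be the bookkeeping in the polarization and regrouping step---correctly identifying which regrouped operators are positive, and hence which terms may legitimately be dropped---and, closely tied to it, the sign control of the factor $1-2t$: discarding the minus-terms requires $1-2t\ge 0$, so this clean argument is really confined to $t\in[0,\tfrac12]$, and a genuine extension to the full range $t\in[0,1]$ would need separate handling.
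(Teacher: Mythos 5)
Your proposal is correct and takes essentially the same route as the paper: the paper first proves the general theorem $w_t(T)\le(1-t)\bigl(\|T\|+w(\tilde{T})\bigr)$ by exactly your argument---polar decomposition, the polarization identity of Lemma \ref{Polarization}, the $SS^{*}$ factorization with $\|SS^{*}\|=\|S^{*}S\|$, and the collapse to $\tfrac{1-2t}{2}\bigl\||T|+\Re(e^{-i\theta}\tilde{T})\bigr\|+\tfrac12\bigl\||T|+\Re(e^{i\theta}\tilde{T})\bigr\|$---and the corollary is then the substitution $t=\tfrac12$. Your closing caveat (dropping the positive ``minus'' terms requires $1-2t\ge 0$, so the argument really covers only $t\in[0,\tfrac12]$) is a sharp observation that the paper passes over in stating its theorem for all $t\in[0,1]$, but it is harmless for the corollary, where $t=\tfrac12$.
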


\section{Weighted Numerical Radius of Operator Matrices}
		
		We begin this section with the following lemmas for weighted numerical radius \\of operator matrices, the proofs of which are very trivial.
		\begin{lemma}\label{weighted op}
			Let $X, Y \in \mathcal{B}(\mathcal{H})$ and $t \in [0,1]$. Then the following equalities holds:
			\begin{eqnarray*}
				&&(i)~ w_t \left(\begin{array}{cc}
				X&O\\
				O&Y	
			\end{array} \right) = \max \{w_t(X), w_t(Y) \}\\
		&&(ii)~~w_t \left(\begin{array}{cc}
			O&X\\
			X&O	
		\end{array} \right) = w_t(X)\\
	&&(iii)~~w_t \left(\begin{array}{cc}
		X&Y\\
		Y&X	
	\end{array} \right) = \max \{w_t(X+Y), w_t(X-Y) \}\\
&& (iv)~~w_t \left(\begin{array}{cc}
	O&X\\
	Y&O	
\end{array} \right) = \sup_{\theta \in \mathbb{R}}\frac{1}{2} \left\|(1-2t)(e^{-i\theta} X + e^{i\theta} Y^*) + (e^{i\theta}X + e^{-i\theta}Y^*) \right\|.
			\end{eqnarray*}
		\end{lemma}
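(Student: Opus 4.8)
The plan is to reduce everything to the ordinary numerical radius through the identity $w_t(S) = w\big((1-2t)S^* + S\big)$ recorded in the introduction, and then invoke standard facts about the numerical radius of $2\times 2$ operator matrices. The first observation is that the weighting map $S \mapsto (1-2t)S^* + S$ acts blockwise: if $\mathbf{T} = \begin{pmatrix} A & B \\ C & D \end{pmatrix}$, then
\[
(1-2t)\mathbf{T}^* + \mathbf{T} = \begin{pmatrix} (1-2t)A^* + A & (1-2t)C^* + B \\ (1-2t)B^* + C & (1-2t)D^* + D \end{pmatrix},
\]
so computing $w_t(\mathbf{T})$ amounts to computing the classical numerical radius of this new block matrix.

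For parts (i)--(iii) I would then apply the three standard block-numerical-radius identities: $w\begin{pmatrix} P & O \\ O & Q \end{pmatrix} = \max\{w(P), w(Q)\}$ for block-diagonal matrices; $w\begin{pmatrix} O & P \\ P & O \end{pmatrix} = w(P)$; and $w\begin{pmatrix} P & Q \\ Q & P \end{pmatrix} = \max\{w(P+Q), w(P-Q)\}$, the last obtained by conjugating with the Hadamard unitary $\frac{1}{\sqrt 2}\begin{pmatrix} I & I \\ I & -I \end{pmatrix}$, which carries the matrix to $\begin{pmatrix} P+Q & O \\ O & P-Q \end{pmatrix}$ and leaves $w$ invariant. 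Feeding in $P = (1-2t)X^* + X$ and $Q = (1-2t)Y^* + Y$ and using $(1-2t)(X\pm Y)^* + (X\pm Y) = P \pm Q$, each of (i), (ii), (iii) collapses to the claimed expression after re-identifying $w(P) = w_t(X)$ and $w(P\pm Q) = w_t(X\pm Y)$.

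Part (iv) is the only one needing the explicit $\theta$-supremum. Here I would instead use the equivalent formula $w(A) = \sup_{\theta\in\mathbb{R}}\|\Re(e^{i\theta}A)\|$ (the $t = \frac{1}{2}$ case of the paper's own definition) applied to $A = (1-2t)\mathbf{T}^* + \mathbf{T} = \begin{pmatrix} O & (1-2t)Y^* + X \\ (1-2t)X^* + Y & O \end{pmatrix}$. Writing $\Re(e^{i\theta}A) = \tfrac12(e^{i\theta}A + e^{-i\theta}A^*)$ produces a self-adjoint off-diagonal block matrix whose $(1,2)$ corner simplifies to $(1-2t)(e^{-i\theta}X + e^{i\theta}Y^*) + (e^{i\theta}X + e^{-i\theta}Y^*)$. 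Since the norm of a self-adjoint off-diagonal block matrix $\begin{pmatrix} O & Z \\ Z^* & O \end{pmatrix}$ equals $\|Z\|$, taking the supremum over $\theta$ yields the stated identity.

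I do not expect a genuine obstacle; the author rightly calls these proofs trivial. The only points requiring care are bookkeeping: correctly transposing and conjugating the blocks when forming $\mathbf{T}^*$, and in (iv) verifying that the corner entry $e^{i\theta}[(1-2t)Y^* + X] + e^{-i\theta}[(1-2t)X + Y^*]$ matches the target after collecting the $(1-2t)$ terms. The underlying block-numerical-radius identities are classical and may be cited rather than reproved.
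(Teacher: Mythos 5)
Your proof is correct, and since the paper offers no proof of this lemma (it simply declares the proofs ``very trivial''), your write-up supplies exactly the intended argument: reduce to the classical numerical radius via $w_t(\mathbf{T}) = w\left((1-2t)\mathbf{T}^* + \mathbf{T}\right)$, observe that the weighting map acts blockwise, and invoke the standard block identities for (i)--(iii) and the facts $w(A)=\sup_{\theta}\|\Re(e^{i\theta}A)\|$ and $\left\|\left(\begin{smallmatrix} O & Z \\ Z^* & O \end{smallmatrix}\right)\right\|=\|Z\|$ for (iv). All the block bookkeeping checks out, including the corner entry $e^{i\theta}\left[(1-2t)Y^*+X\right]+e^{-i\theta}\left[(1-2t)X+Y^*\right]=(1-2t)(e^{-i\theta}X+e^{i\theta}Y^*)+(e^{i\theta}X+e^{-i\theta}Y^*)$ in (iv), which produces the stated $\tfrac12$-supremum formula.
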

		\begin{theorem} \label{th op1}
				Let $X, Y \in \mathcal{B}(\mathcal{H}) $ and $t \in [0,1]$. Then
				\begin{eqnarray*}
				 w_t^2\left(\begin{array}{cc}
				O&X\\
				Y&O	
				\end{array} \right) &\leq&  (1-2t+2t^2) \left\||X|^2 + |Y^*|^2 \right\| +(2-4t+4t^2) w(YX)\\ 
			&&	+ (1-2t) \left(\|X\|^2 + \|YX\| + \|Y\|^2 \right) . 
				\end{eqnarray*}
				\end{theorem}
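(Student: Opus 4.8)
The natural starting point is the exact evaluation in part~(iv) of Lemma~\ref{weighted op}. Put $S=\bigl(\begin{smallmatrix}O&X\\Y&O\end{smallmatrix}\bigr)$, abbreviate $s=1-2t$, and set
\[ A_\theta=s\bigl(e^{-i\theta}X+e^{i\theta}Y^{*}\bigr)+\bigl(e^{i\theta}X+e^{-i\theta}Y^{*}\bigr). \]
That lemma reads $w_t(S)=\sup_{\theta\in\mathbb{R}}\tfrac12\|A_\theta\|$, hence $w_t^2(S)=\tfrac14\sup_{\theta}\sup_{\|x\|=1}\|A_\theta x\|^2$. So I would reduce the entire problem to estimating $\|A_\theta x\|^2$ for a fixed phase $\theta$ and a fixed unit vector $x$, and only at the very end pass to the two suprema and multiply by $\tfrac14$.

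To estimate $\|A_\theta x\|^2=\langle A_\theta x,A_\theta x\rangle$ I would expand by sesquilinearity. Each diagonal piece is of the form $\|Xx\|^2=\langle|X|^2x,x\rangle$ or $\|Y^{*}x\|^2=\langle|Y^{*}|^2x,x\rangle$, while each mixed piece reduces, via $\langle Xx,Y^{*}x\rangle=\langle YXx,x\rangle$, to $\langle YXx,x\rangle$ and its conjugate; the surviving coefficients are unimodular numbers ($e^{\pm2i\theta}$, $\cos2\theta$) times a power of $s$. Thus the expression organizes into the single multiple $(s^{2}+1+2s\cos2\theta)\langle(|X|^2+|Y^{*}|^2)x,x\rangle$ together with the three mixed terms $2s^{2}\operatorname{Re}(e^{-2i\theta}\langle YXx,x\rangle)$, $4s\operatorname{Re}\langle YXx,x\rangle$ and $2\operatorname{Re}(e^{2i\theta}\langle YXx,x\rangle)$. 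I would discard the phases by $|\operatorname{Re}(e^{2ik\theta}w)|\le|w|$ and $|\cos2\theta|\le1$, making the bound uniform in $\theta$, and then measure the two scalar quantities in two different ways so as to match the asymmetric constants in the statement. The parts weighted by $s^{2}$ and $1$ are handled by the sharp estimates $\langle(|X|^2+|Y^{*}|^2)x,x\rangle\le\||X|^2+|Y^{*}|^2\|$ and $|\langle YXx,x\rangle|\le w(YX)$, which account for terms of the form $\||X|^2+|Y^{*}|^2\|$ and $w(YX)$ with constants at most $1-2t+2t^{2}=\tfrac{(1-2t)^2+1}{2}$ and $2-4t+4t^{2}=(1-2t)^2+1$; the parts carrying the single factor $s$ (namely $2s\cos2\theta\,\langle(\cdots)x,x\rangle$ and $4s\operatorname{Re}\langle YXx,x\rangle$) are handled by the coarser operator‑norm bounds $\langle(|X|^2+|Y^{*}|^2)x,x\rangle\le\|X\|^2+\|Y\|^2$ and $|\langle YXx,x\rangle|\le\|YX\|$, producing the $(1-2t)(\|X\|^2+\|YX\|+\|Y\|^2)$ contribution. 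Taking $\sup_{\|x\|=1}$, then $\sup_\theta$, and dividing by $4$ then delivers the asserted bound.

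The main obstacle is exactly this bookkeeping forced by the \emph{single} supremum over $\theta$: the oscillatory quantities above attain their maxima at different values of $\theta$, so one cannot optimize them simultaneously and is obliged to bound each phase factor separately by its modulus. It is this step that makes both $w(YX)$ and $\|YX\|$ (and both $\||X|^2+|Y^{*}|^2\|$ and $\|X\|^2+\|Y\|^2$) appear in the final estimate, and routing each term to the correct one of the two bounds is the only genuinely delicate point. A secondary, minor subtlety is the sign of $s=1-2t$: the triangle inequality really produces $|1-2t|$, so the signed constants $1-2t$ displayed in the statement are the honest ones only for $t\le\tfrac12$, consistent with the convention already in force in Theorems~\ref{th2} and~\ref{th4}. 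Carrying the computation out sharply in fact appears to yield strictly smaller constants than those written, so the stated inequality follows with room to spare.
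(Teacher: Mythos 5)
Your proposal is correct and follows essentially the same route as the paper: both start from Lemma \ref{weighted op}(iv), expand the square of $S=(1-2t)(e^{-i\theta}X+e^{i\theta}Y^{*})+(e^{i\theta}X+e^{-i\theta}Y^{*})$ (the paper at the operator level via $S^{*}S$, you at the quadratic-form level via $\|A_\theta x\|^{2}$, which is the same computation), strip the phases, and route the $(1-2t)^{2}$- and $1$-weighted terms to $\||X|^2+|Y^*|^2\|$ and $w(YX)$ while sending the $(1-2t)$-weighted terms to $\|X\|^2+\|Y\|^2$ and $\|YX\|$. Your observations that the sharp constants come out smaller than those stated, and that the coefficient $1-2t$ should really be $|1-2t|$, are accurate (the paper's own bookkeeping is looser on both points), but they do not change the substance of the argument.
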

			\begin{proof}
		 Let $\theta \in \mathbb{R}$. Assume that $S = (1-2t)(e^{-i\theta} X + e^{i\theta} Y^*) + (e^{i\theta}X + e^{-i\theta}Y^*).$\\ By simple calculation it follows: 
			\begin{eqnarray*}
			S^*S &=&  (1-2t)^2 \left(|X|^2 + |Y^*|^2  +2 \Re\left( e^{-2i\theta} YX\right) \right) \\
			&& + (1-2t) \left( 2\Re\left(e^{2i\theta}\right) |X|^2  + 2\Re\left(e^{2i\theta}\right)|Y|^2  +2\Re(YX) \right)\\
			&& + \left(|X|^2 + |Y^*|^2 + 2\Re \left(e^{2i\theta} YX \right)\right).
			\end{eqnarray*}
		Now taking norm on both side and using triangle inequality we have,
		\begin{eqnarray*}
		\frac{1}{2}\|S\|^2 &\leq& (1-2t+2t^2) \left\||X|^2 + |Y^*|^2 \right\| +(1-2t)^2 \left\|\Re(e^{-2i\theta} YX) \right\|\\
		&& +  \left\|\Re(e^{2i\theta} YX) \right\| \\
		&&+ (1-2t) \left(\left\|\Re(e^{2i\theta})|X|^2 \right\| + \left\|\Re(e^{-2i\theta}) |Y^*|^2\right\| + \left\|\Re(YX) \right\| \right)\\
		&\leq& (1-2t +2t^2) \left\||X|^2 + |Y^*|^2 \right\| + (2-4t+4t^2)w(YX)\\
		 && + (1-2t) \left( \||X|^2\| + \||Y^*|^2\| + \|YX\|\right)\\
		&=& (1-2t+2t^2) \left(\left\||X|^2 + |Y^*|^2 \right\|\right) + (2-4t+4t^2)w(YX)\\ 
		&&+ (1-2t) \left(\|X\|^2 + \|YX\| + \|Y\|^2 \right).
	\end{eqnarray*}
Now taking supremum over $\theta \in \mathbb{R}$ and using Lemma \ref{weighted op} (iv) we get our \\required inequality.
	\end{proof}
The following corollary can be obtained from Theorem \ref{th op1}.
\begin{cor}
		If we take $t=\frac{1}{2}$ in Theorem  \ref{th op1} we get the following\\ result \cite[Th. 2.4]{OK} which says that, for $T \in \mathcal{B}(\mathcal{H})$
	\[ w^2(T) \leq \frac{1}{4} \|T^*T + TT^*\| + \frac{1}{2} w(T^2). \] Thus the inequality obtain in Theorem \ref{th op1} generalizes the upper bound obtained\\ in  \cite[Th. 2.4]{OK}.
\end{cor}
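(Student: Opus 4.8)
The plan is to treat this statement as a pure specialization of Theorem \ref{th op1}, so that no new estimate is required: I only need to substitute $t=\tfrac12$ and then choose the off-diagonal blocks $X,Y$ so that the $2\times 2$ block operator collapses to the single operator $T$. Since Lemma \ref{weighted op}(ii) asserts $w_t\bigl(\begin{smallmatrix} O & X \\ X & O\end{smallmatrix}\bigr)=w_t(X)$, the correct choice is plainly $X=Y=T$, which is exactly what converts a bound for operator matrices into a bound for $w(T)$.

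First I would evaluate the three coefficients occurring in Theorem \ref{th op1} at $t=\tfrac12$: one finds $1-2t=0$, $\;1-2t+2t^2=\tfrac12$, and $2-4t+4t^2=1$. The vanishing of $1-2t$ annihilates the entire last summand $(1-2t)\bigl(\|X\|^2+\|YX\|+\|Y\|^2\bigr)$, leaving only the first two terms. Next I would set $X=Y=T$ and identify the surviving quantities: on the left, Lemma \ref{weighted op}(ii) together with the fact that $w_{1/2}=w$ gives $w\bigl(\begin{smallmatrix} O & T \\ T & O\end{smallmatrix}\bigr)=w(T)$; on the right, $|X|^2+|Y^*|^2=T^*T+TT^*$ and $YX=T^2$. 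Feeding these identifications into the reduced inequality produces a bound of exactly the shape
\[ w^2(T)\le c_1\,\|T^*T+TT^*\|+c_2\,w(T^2), \]
which is the form of \cite[Th.~2.4]{OK}.

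The one point requiring care, and the place where I expect the bookkeeping to be delicate rather than any analytic difficulty, is the normalizing constant inherited from Lemma \ref{weighted op}(iv). Because $w_t\bigl(\begin{smallmatrix} O & X \\ Y & O\end{smallmatrix}\bigr)=\sup_{\theta}\tfrac12\|S\|$ with $S$ the operator appearing there, squaring the numerical radius introduces a factor $\tfrac14$ on $\|S\|^2$; tracking this $\tfrac12$ (equivalently $\tfrac14$ after squaring) through the coefficient evaluation is precisely what reduces the raw constants $\tfrac12$ and $1$ to the sharp values $\tfrac14$ and $\tfrac12$ of \cite[Th.~2.4]{OK}. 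Once this factor is kept straight the corollary is immediate, and the emergence of the classical constants $\tfrac14$ and $\tfrac12$ is what certifies that Theorem \ref{th op1} genuinely generalizes \cite[Th.~2.4]{OK}.
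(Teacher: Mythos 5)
Your substitutions are all correct as far as they go: at $t=\tfrac12$ the coefficients are $1-2t=0$, $1-2t+2t^2=\tfrac12$, $2-4t+4t^2=1$; the choice $X=Y=T$ together with Lemma \ref{weighted op}(ii) and $w_{1/2}=w$ collapses the left side to $w^2(T)$; and $|X|^2+|Y^*|^2=T^*T+TT^*$, $YX=T^2$. But observe what this actually yields from the \emph{statement} of Theorem \ref{th op1}: $w^2(T)\leq \tfrac12\|T^*T+TT^*\|+w(T^2)$, with constants $\tfrac12$ and $1$ --- exactly twice the bound claimed in the corollary. So your opening claim that the corollary is a ``pure specialization \dots\ no new estimate is required'' cannot stand, and your third paragraph in fact contradicts it. Worse, the repair you propose there --- reinstating the factor $\tfrac12$ from Lemma \ref{weighted op}(iv) --- is not legitimate if applied on top of the theorem's stated inequality: that normalization is already supposed to be built into the statement of Theorem \ref{th op1}, so layering it on a second time is double counting, not bookkeeping.

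What actually rescues the corollary (and what your final paragraph is implicitly doing) is re-entering the \emph{proof} of Theorem \ref{th op1} rather than quoting its statement. The proof establishes $\tfrac12\|S\|^2\leq (1-2t+2t^2)\left\||X|^2+|Y^*|^2\right\|+(2-4t+4t^2)\,w(YX)+(1-2t)(\cdots)$, while Lemma \ref{weighted op}(iv) gives $w_t^2\bigl(\begin{smallmatrix} O&X\\ Y&O \end{smallmatrix}\bigr)=\sup_\theta \tfrac14\|S\|^2$; hence what is really proven is $w_t^2\leq \tfrac12\times$ the theorem's stated right-hand side, i.e.\ the printed theorem has lost a factor $\tfrac12$ relative to its own proof. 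Specializing the sharper, actually-proven inequality at $t=\tfrac12$, $X=Y=T$ (where $S=2\,\Re(e^{i\theta}T)$ and $S^*S=T^*T+TT^*+2\,\Re(e^{2i\theta}T^2)$) gives $w^2(T)=\sup_\theta\tfrac14\|S\|^2\leq \tfrac14\|T^*T+TT^*\|+\tfrac12\,w(T^2)$, which is precisely \cite[Th.~2.4]{OK}. So your derivation is correct in substance exactly because you track the $\tfrac14$ normalization through $\|S^*S\|$ --- that is, because you use the proof and not the statement; you should say this explicitly, and you should flag that, as stated, Theorem \ref{th op1} only implies the corollary with doubled constants, an inconsistency internal to the paper itself, since the paper presents the corollary as immediate substitution.
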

Next we prove the following theorem.
\begin{theorem} \label{th op2}
	Let $X, Y, Z, W \in \mathcal{B}(\mathcal{H})$ and $t \in [0,1]$. Then 
	\begin{eqnarray*}
		w_t^2  \left( \begin{array}{cc}
			X&Y\\
			Z&W
		\end{array}\right) &\leq& 8(1-t)^2 \max\{w^2(X), w^2(W) \} \\&&+ 2(1-t)^2 \max \{\left\| |Z|^2 + |Y^*|^2 \right\|, \left\||Y|^2 + |Z^*|^2 \right\| \}\\&& + 4(1-t)^2 \max \{w(YZ), w(ZY) \}.
	\end{eqnarray*}
\end{theorem}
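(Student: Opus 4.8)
The plan is to peel the matrix into its diagonal and off-diagonal parts and reduce everything to the classical numerical radius, where the estimates of the previous section apply. Write $\mathbb{T}=\bigl(\begin{smallmatrix}X&Y\\Z&W\end{smallmatrix}\bigr)$ and split $\mathbb{T}=\mathbb{D}+\mathbb{O}$, with diagonal part $\mathbb{D}=\bigl(\begin{smallmatrix}X&O\\O&W\end{smallmatrix}\bigr)$ and off-diagonal part $\mathbb{O}=\bigl(\begin{smallmatrix}O&Y\\Z&O\end{smallmatrix}\bigr)$. Since $w_t$ obeys the triangle inequality, $w_t(\mathbb{T})\le w_t(\mathbb{D})+w_t(\mathbb{O})$, and $(a+b)^2\le 2a^2+2b^2$ gives $w_t^2(\mathbb{T})\le 2w_t^2(\mathbb{D})+2w_t^2(\mathbb{O})$. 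The ubiquitous factor $(1-t)^2$ tells me the real lever is the passage from the weighted to the ordinary radius: from $w_t(A)=w((1-2t)A^*+A)\le(1+|1-2t|)\,w(A)$ one gets $w_t(A)\le 2(1-t)\,w(A)$, hence $w_t^2(A)\le 4(1-t)^2w^2(A)$, and I would apply this to $\mathbb{D}$ and $\mathbb{O}$ separately.

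For the diagonal part Lemma \ref{weighted op}(i) gives $w_t(\mathbb{D})=\max\{w_t(X),w_t(W)\}$, so the reduction yields $w_t^2(\mathbb{D})\le 4(1-t)^2\max\{w^2(X),w^2(W)\}$; after the factor $2$ this is precisely the first term $8(1-t)^2\max\{w^2(X),w^2(W)\}$.

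For the off-diagonal part I would first write $w_t^2(\mathbb{O})\le 4(1-t)^2w^2(\mathbb{O})$ and then estimate $w^2(\mathbb{O})$ directly. Specializing Lemma \ref{weighted op}(iv) to $t=\tfrac12$ gives $w(\mathbb{O})=\sup_{\theta}\tfrac12\|S_\theta\|$ with $S_\theta=e^{i\theta}Y+e^{-i\theta}Z^*$. Computing $\|S_\theta\|^2=\|S_\theta^*S_\theta\|$ gives $\||Y|^2+|Z^*|^2+2\Re(e^{2i\theta}ZY)\|\le\||Y|^2+|Z^*|^2\|+2w(ZY)$, while $\|S_\theta\|^2=\|S_\theta S_\theta^*\|$ gives the companion bound $\||Z|^2+|Y^*|^2\|+2w(YZ)$; both hold because $\|S_\theta\|^2=\|S_\theta^*S_\theta\|=\|S_\theta S_\theta^*\|$. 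Retaining the maxima of the two then gives $w^2(\mathbb{O})\le\tfrac14\max\{\||Z|^2+|Y^*|^2\|,\||Y|^2+|Z^*|^2\|\}+\tfrac12\max\{w(YZ),w(ZY)\}$, and multiplying by $2\cdot 4(1-t)^2$ reproduces the remaining two terms of the claim. Summing the diagonal and off-diagonal contributions finishes the argument.

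The step I expect to be delicate is the reduction $w_t(A)\le 2(1-t)w(A)$. The honest inequality is $w_t(A)\le(1+|1-2t|)w(A)$, which collapses to $2(1-t)w(A)$ only when $1-2t\ge 0$, i.e.\ for $t\in[0,\tfrac12]$; for $t\in(\tfrac12,1]$ the correct coefficient is $2t$, and a rank-one nilpotent already breaks $w_t\le 2(1-t)w$ at $t=1$. Thus the clean statement holds on $[0,\tfrac12]$, or on all of $[0,1]$ if $2(1-t)$ is read as $2\max\{t,1-t\}$ (equivalently $(1-t)^2$ as $\max\{t,1-t\}^2$), which is also consistent with the symmetry $w_t=w_{1-t}$ of the left-hand side. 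I would either restrict $t$ or record the symmetric coefficient, after which all the block estimates above go through unchanged.
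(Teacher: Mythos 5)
Your argument is correct where the theorem itself is correct, and it takes a genuinely different route for the key estimate. The paper uses the same splitting into $R=\bigl(\begin{smallmatrix}X&O\\O&W\end{smallmatrix}\bigr)$ and $S=\bigl(\begin{smallmatrix}O&Y\\Z&O\end{smallmatrix}\bigr)$, but argues pointwise on unit vectors: it bounds $|\langle((1-2t)T^*+T)x,x\rangle|\le 2(1-t)\bigl(|\langle Rx,x\rangle|+|\langle Sx,x\rangle|\bigr)$, squares by convexity, and then treats $|\langle Sx,x\rangle|^2=|\langle Sx,x\rangle\langle x,S^*x\rangle|$ with Buzano's inequality (Lemma \ref{buzano}), giving $\tfrac12|\langle S^2x,x\rangle|+\tfrac14\langle(|S|^2+|S^*|^2)x,x\rangle$; the block-diagonal forms of $S^2$ and $|S|^2+|S^*|^2$ then produce the maxima. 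You instead stay at the operator level: triangle inequality for $w_t$, the reduction $w_t(A)\le(1+|1-2t|)\,w(A)$ applied blockwise, and a direct evaluation of $w(S)$ through the norm formula of Lemma \ref{weighted op}(iv) at $t=\tfrac12$, estimating both $\|S_\theta^*S_\theta\|$ and $\|S_\theta S_\theta^*\|$. In effect you reprove, without Buzano, the Abu-Omar--Kittaneh bound $w^2(S)\le\tfrac14\bigl\||S|^2+|S^*|^2\bigr\|+\tfrac12 w(S^2)$ of \cite[Th. 2.4]{OK} for the off-diagonal block, and the constants come out identical. Your version is more modular; the paper's pointwise scheme is the one that extends to Theorems \ref{th op3}--\ref{th op5}, where the diagonal and off-diagonal parts interact through cross terms such as $\langle Rx,x\rangle\langle x,S^*x\rangle$ and cannot be decoupled by a triangle inequality first.

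Your closing caveat about the range of $t$ is not a defect of your proof but a genuine gap in the paper's. The paper's first displayed inequality carries the factor $4(1-t)^2$, which amounts to using $1-2t$ in place of $|1-2t|$; this is valid only for $t\in[0,\tfrac12]$, and the theorem as stated actually fails beyond that range: at $t=1$, with $X=Z=W=O$ and $Y=I$, the right-hand side is $0$ while $w_1(T)=w\bigl(\begin{smallmatrix}O&I\\-I&O\end{smallmatrix}\bigr)=1$. So restricting to $t\in[0,\tfrac12]$, or replacing $1-t$ by $\max\{t,1-t\}$, is exactly the needed repair. One small correction to your discussion: the symmetry $w_t=w_{1-t}$ holds for the definition $\sup_\theta\|\Re_t(e^{i\theta}T)\|$, but not for the formula $w_t(T)=w((1-2t)T^*+T)$ that the paper (and your proof) actually computes with --- test a nonzero self-adjoint $T$, where the latter gives $w_t(T)=2(1-t)\|T\|$ --- so the $\max\{t,1-t\}$ coefficient should be justified, as you also do, directly from $w_t(A)\le(1+|1-2t|)\,w(A)$ rather than by symmetry.
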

		\begin{proof}
			Let $T = \left(\begin{array}{cc}
				X&Y\\
				Z&W
			\end{array} \right)$, $R = \left(\begin{array}{cc}
			X&O\\
			O&W
		\end{array} \right)$ and $S = \left(\begin{array}{cc}
			O&Y\\
			Z&O
		\end{array} \right) $.\\ So, $T = R + S.$ Assume that $x \in \mathcal{H} \oplus \mathcal{H}$ with $\|x\| =1.$ Then 
	\begin{eqnarray*}
		\left|\langle \left((1-2t)T^* + T \right)x, x \rangle  \right|^2 &\leq& 
		 4(1-t)^2 \left(|\langle Rx, x \rangle| + |\langle Sx, x \rangle| \right)^2\\
		 &\leq& 8(1-t)^2 \left(|\langle Rx, x \rangle|^2 + |\langle Sx, x \rangle|^2 \right)\\&&\,\,\,\,\,\,\,\,\,\,\,\,\,\,\,\,\,\,\,\,\,\,\,\,\,\,\,~\mbox{By convexity of $f(t) =t^2$}\\
		 &=& 8(1-t)^2 \left(|\langle Rx, x \rangle|^2 + |\langle Sx, x \rangle \langle x, S^*x \rangle| \right)\\
		 &\leq& 8(1-t)^2|\langle Rx, x \rangle|^2 + 4(1-t)^2|\langle Sx, S^*x \rangle| \\&&+ 4 (1-t)^2 \|Sx\|\|S^*x\|~~,(\mbox{using Lemma \ref{buzano}})\\
		 &\leq& 8(1-t)^2 |\langle Rx, x \rangle|^2 +4(1-t)^2 |\langle S^2x, x \rangle|\\ && + 2(1-t)^2 \langle (|S|^2 + |S^*|^2)x, x \rangle \\
		 &=& 8(1-t)^2 \left|\left\langle \left( \begin{array}{cc}
		 	X&O\\
		 	O&W
		 \end{array}\right)x, x \right\rangle\right|^2\\&+&  4(1-t)^2 \left|\left\langle \left( \begin{array}{cc}
		 YZ&O\\
		 O&ZY
	 \end{array}\right)x, x \right\rangle\right|\\ &+&  2(1-t)^2 \left| \left\langle\left( \begin{array}{cc}
	 |Z|^2 + |Y^*|^2&O\\
	 O& |Y|^2 + |Z^*|^2
 \end{array}\right)x, x \right \rangle\right|\\
&\leq& 8(1-t)^2 \max \{w^2(X), w^2(W) \}\\&+&  4(1-t)^2 \max \{w(YZ), w(ZY) \} \\&+& 2(1-t)^2 \max \{\left\||Z|^2 + |Y^*|^2 \right\|, \left\||Y|^2 + |Z^*|^2 \right\|  \}.
	\end{eqnarray*}
		\end{proof}
	\begin{cor} \label{cor1}
		If we take $X=W$ and $Y=Z$ in Theorem \ref{th op2} then we get 
		\begin{eqnarray*}
			w_t^2 \left(\begin{array}{cc}
				X&Y\\
				Y&X
			\end{array} \right) &=& \max \{w_t^2(X+Y), w_t^2(X-Y)\} \\
		&\leq& 8(1-t)^2 w^2(X) + 4(1-t)^2 w(Y^2) + 2(1-t)^2 \left\||Y|^2 + |Y^*|^2\right\| .
		\end{eqnarray*}
	\end{cor}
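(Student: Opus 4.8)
The plan is to obtain this corollary as a direct specialization of Theorem \ref{th op2}, using Lemma \ref{weighted op}(iii) to rewrite the left-hand side; no fresh estimation is needed, because the substitution $X=W$ and $Y=Z$ collapses each of the three maxima in the general bound.

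First I would record the equality asserted on the left. Lemma \ref{weighted op}(iii) gives
\[
w_t\left(\begin{array}{cc} X&Y\\ Y&X\end{array}\right) = \max\{w_t(X+Y),\, w_t(X-Y)\}.
\]
Since $w_t(X+Y)$ and $w_t(X-Y)$ are nonnegative and $s\mapsto s^2$ is increasing on $[0,\infty)$, squaring commutes with the maximum, so
\[
w_t^2\left(\begin{array}{cc} X&Y\\ Y&X\end{array}\right) = \max\{w_t^2(X+Y),\, w_t^2(X-Y)\},
\]
which is the first equality in the statement.

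Next I would specialize the bound of Theorem \ref{th op2} by setting $X=W$ and $Y=Z$. Each of the three maxima then has coinciding arguments: $\max\{w^2(X), w^2(W)\}=w^2(X)$ since $X=W$; $\max\{w(YZ), w(ZY)\}=w(Y^2)$ since $Y=Z$ forces $YZ=ZY=Y^2$; and, because $Z=Y$ yields $|Z|^2=Z^*Z=Y^*Y=|Y|^2$ and $|Z^*|^2=ZZ^*=YY^*=|Y^*|^2$, the two operators inside the remaining maximum coincide, giving $\max\{\||Z|^2+|Y^*|^2\|,\ \||Y|^2+|Z^*|^2\|\}=\||Y|^2+|Y^*|^2\|$.

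Substituting these into the right-hand side of Theorem \ref{th op2} produces
\[
8(1-t)^2 w^2(X) + 4(1-t)^2 w(Y^2) + 2(1-t)^2\left\||Y|^2+|Y^*|^2\right\|,
\]
which is exactly the claimed bound. There is no genuine obstacle here: the whole content is the bookkeeping that each maximum degenerates once the diagonal blocks and the off-diagonal blocks are forced equal, together with the observation that squaring passes through the maximum in Lemma \ref{weighted op}(iii). The only point requiring a moment's care is checking that $Y=Z$ really identifies both entries of the third maximum, which is precisely the computation $|Z|^2=|Y|^2$ and $|Z^*|^2=|Y^*|^2$ noted above.
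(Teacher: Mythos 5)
Your proof is correct and is exactly the argument the paper intends: the corollary is stated without a separate proof precisely because it is the direct specialization $X=W$, $Y=Z$ of Theorem \ref{th op2} combined with Lemma \ref{weighted op}(iii), which is what you carry out. Your bookkeeping (collapsing the three maxima via $YZ=ZY=Y^2$, $|Z|^2=|Y|^2$, $|Z^*|^2=|Y^*|^2$, and noting that squaring commutes with the maximum) is accurate and complete.
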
	
		\begin{remark} Taking $X = Y$ in corollary \ref{cor1} we get
			\begin{eqnarray*}
				w_t^2(X) &\leq& 2(1-t)^2 w^2(X) + (1-t)^2 w(X^2) + \frac{(1-t)^2}{2} \left\| |X|^2 + |X^*|^2 \right\|.
			\end{eqnarray*}
		In particular if we take $t = \frac{1}{2}$, then we obtain the following well known inequality: \[w^2(X) \leq \frac{1}{2} w(X^2) + \frac{1}{4} \left\| |X|^2 + |X^*|^2 \right\| .\]
			
		\end{remark}
		Next we prove another upper bound for $2 \times 2$ matrices.
		\begin{theorem}\label{th op3}
				Let $X, Y, Z, W \in \mathcal{B}(\mathcal{H})$ and $t \in [0,1]$. Then 
			\begin{eqnarray*}
				w_t^2  \left( \begin{array}{cc}
					X&Y\\
					Z&W
				\end{array}\right) &\leq& 4(1-t)^2 \max \{w^2(X), w^2(W) \} + 4(1-t)^2  w \left(\begin{array}{cc}
				O&YW\\
				ZX & O
			\end{array} \right)\\ && + 2(1-t)^2 \max \{w(YZ), w(ZY) \}\\ && + (1-t)^2 \max \left\{\left\|2|X|^2 + 3 |Y^*|^2 + |Z|^2 \right\|, \left\|2|W|^2 + 3 |Z^*|^2 + |Y|^2 \right\| \right\}.
			\end{eqnarray*}
		\end{theorem}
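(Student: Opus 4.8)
The plan is to mimic the decomposition used in the proof of Theorem \ref{th op2}, but to \emph{retain} the cross term instead of discarding it through convexity; this is exactly what produces the extra summand $4(1-t)^2\,w\!\left(\begin{smallmatrix}O&YW\\ZX&O\end{smallmatrix}\right)$ and halves the constants relative to Theorem \ref{th op2}. Write $T=\left(\begin{smallmatrix}X&Y\\Z&W\end{smallmatrix}\right)=R+S$ with $R=\left(\begin{smallmatrix}X&O\\O&W\end{smallmatrix}\right)$ and $S=\left(\begin{smallmatrix}O&Y\\Z&O\end{smallmatrix}\right)$, and fix a unit vector $x\in\mathcal H\oplus\mathcal H$. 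Using the triangle-inequality bound $|\langle((1-2t)T^*+T)x,x\rangle|\le 2(1-t)|\langle Tx,x\rangle|$ already invoked at the start of Theorem \ref{th op2}, and writing $r:=\langle Rx,x\rangle$, $s:=\langle Sx,x\rangle$ so that $\langle Tx,x\rangle=r+s$, I would reduce to estimating $|\langle Tx,x\rangle|^2\le |r|^2+|s|^2+2|r|\,|s|$.

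First I would handle the three pieces separately. The diagonal piece is immediate: $|r|^2\le w^2(R)=\max\{w^2(X),w^2(W)\}$ by Lemma \ref{weighted op}(i), giving the term $4(1-t)^2\max\{w^2(X),w^2(W)\}$. For $|s|^2$ I would reuse the Buzano-plus-AM/GM step from Theorem \ref{th op2}: writing $|s|^2=|\langle Sx,x\rangle\langle x,S^*x\rangle|$ and applying Lemma \ref{buzano}, one gets $|s|^2\le \tfrac12|\langle S^2x,x\rangle|+\tfrac14\langle(|S|^2+|S^*|^2)x,x\rangle$, where the block computations $S^2=\mathrm{diag}(YZ,ZY)$, $|S|^2=\mathrm{diag}(|Z|^2,|Y|^2)$, $|S^*|^2=\mathrm{diag}(|Y^*|^2,|Z^*|^2)$ are routine; the first part yields $2(1-t)^2\max\{w(YZ),w(ZY)\}$.

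The crux is the cross term, and it is where the off-diagonal block $\left(\begin{smallmatrix}O&YW\\ZX&O\end{smallmatrix}\right)$ must be made to appear. Here I would use the identity $\langle Sx,x\rangle=\langle x,S^*x\rangle$ to write $2|r|\,|s|=2\,|\langle Rx,x\rangle\,\langle x,S^*x\rangle|$, and then apply Lemma \ref{buzano} with $a=Rx$, $e=x$, $b=S^*x$, obtaining $2|r|\,|s|\le |\langle Rx,S^*x\rangle|+\|Rx\|\,\|S^*x\|$. The key point is that $\langle Rx,S^*x\rangle=\langle SRx,x\rangle$ with $SR=\left(\begin{smallmatrix}O&YW\\ZX&O\end{smallmatrix}\right)$, so this term is controlled by $w\!\left(\begin{smallmatrix}O&YW\\ZX&O\end{smallmatrix}\right)$, producing the summand $4(1-t)^2\,w\!\left(\begin{smallmatrix}O&YW\\ZX&O\end{smallmatrix}\right)$; the AM/GM bound $\|Rx\|\,\|S^*x\|\le \tfrac12\langle(|R|^2+|S^*|^2)x,x\rangle$ feeds the remaining positive part. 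I expect the main (though not deep) obstacle to be precisely the choice to pair $Rx$ with $S^*x$ rather than with $Sx$: the naive pairing yields $S^*R$ and hence the wrong off-diagonal block $\left(\begin{smallmatrix}O&Z^*W\\Y^*X&O\end{smallmatrix}\right)$, so recognizing the adjoint-shift $\langle Sx,x\rangle=\langle x,S^*x\rangle$ is what makes $SR$ (not $S^*R$) emerge.

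Finally I would collect the positive-operator contributions. After multiplying through by $4(1-t)^2$, the terms $\tfrac14\langle(|S|^2+|S^*|^2)x,x\rangle$ and $\tfrac12\langle(|R|^2+|S^*|^2)x,x\rangle$ combine into $(1-t)^2\langle(2|R|^2+|S|^2+3|S^*|^2)x,x\rangle$, and the block form $2|R|^2+|S|^2+3|S^*|^2=\mathrm{diag}\!\big(2|X|^2+3|Y^*|^2+|Z|^2,\;2|W|^2+3|Z^*|^2+|Y|^2\big)$ gives exactly the last summand once one bounds this quadratic form of a positive block-diagonal operator by its operator norm. Bounding the numerical-radius quantities by $w$, bounding the diagonal forms by the corresponding operator norms, and taking the supremum over unit $x$ then yields the stated inequality. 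The only genuine bookkeeping is tracking the coefficients $2,1,3$ on $|R|^2,|S|^2,|S^*|^2$ through the two AM/GM steps; everything else is the machinery already established above.
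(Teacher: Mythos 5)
Your proposal is correct and follows essentially the same route as the paper's own proof: the same decomposition $T=R+S$, the same retention of the cross term $2|\langle Rx,x\rangle|\,|\langle Sx,x\rangle|$, the same adjoint-shift $\langle Sx,x\rangle=\langle x,S^*x\rangle$ followed by Buzano's inequality to produce $\langle SRx,x\rangle$ (hence the block $\left(\begin{smallmatrix}O&YW\\ZX&O\end{smallmatrix}\right)$), Buzano again on $|\langle Sx,x\rangle|^2$, and the identical AM/GM bookkeeping yielding $2|R|^2+|S|^2+3|S^*|^2$. Nothing essential differs, and your coefficient tracking matches the paper's.
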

	\begin{proof}
		Let $T, R, S$ as in Theorem \ref{th op2} and $x \in \mathcal{H} \oplus \mathcal{H}$ with $\|x\| =1.$ Then 
		\begin{eqnarray*}
				\left|\langle \{(1-2t)T^*+ T \}x, x \rangle  \right|^2 &\leq& 4(1-t)^2 \left(|\langle Rx, x \rangle| + |\langle Sx, x \rangle | \right)^2\\
				&=& 4(1-t)^2 \left[|\langle Rx, x \rangle| ^2 + 2 |\langle Rx, x \rangle| |\langle Sx, x \rangle| + |\langle Sx, x \rangle|^2  \right]\\
				&=& 4(1-t)^2 \left[|\langle Rx, x \rangle|^2 + 2 |\langle Rx, x \rangle \langle x, S^*x \rangle|\right]\\ && +4(1-t)^2 |\langle Sx, x \rangle \langle x, S^*x \rangle| \\
				&\leq& 4(1-t)^2 |\langle Rx, x \rangle|^2 + 4(1-t)^2|\langle Rx, S^*x \rangle|\\&& + 4(1-t)^2\|Rx\| \|S^*x\| \\ &+&  2(1-t)^2 \left[|\langle Sx, S^*x \rangle| + \|Sx\|\|S^*x\| \right]\\&&\,\,\,\,\,\,\,\,\,\,\,\,\,\,\,\,\,\,\,\,\,\,\,\,\,\,\,\,\,\,\,\,\,\,\,\,\,\,\,\,\,\,\,\,\,\,\,\,\,\,\,\,\,\,\,\,\,\,\,\,\,\,\,\,\,\,\,\,\,\,\,\,\,\,\,\,\,\,\ ~(\mbox{using Lemma \ref{buzano}}) \\
				&\leq& 4(1-t)^2 |\langle Rx, x \rangle|^2 + 4(1-t)^2 |\langle (SR)x, x \rangle|\\ &+&  2(1-t)^2 \left\langle \left(|R|^2 + |S^*|^2 \right)x, x \right \rangle + 2(1-t)^2 |\langle S^2x, x \rangle| \\ &+&  (1-t)^2 \left\langle \left(|S|^2 + |S^*|^2 \right)x, x \right \rangle	\\
				&\leq& 4(1-t)^2 \max \{w^2(X), w^2(W) \}\\&& + 4(1-t)^2 w \left(\begin{array}{cc}
					O&YW\\
					ZX&O
				\end{array} \right) \\ &+&  2(1-t)^2 \max \{w(YZ), w(ZY) \}\\&+&  (1-t)^2\max\left\{ \|W\|, \|V\| \right\}
				\end{eqnarray*}
			where, $W =2|X|^2 + 3 |Y^*|^2 + |Z|^2$~ and~ $V =2|W|^2 + 3 |Z^*|^2 + |Y|^2$.
	
	\end{proof}	
		\begin{cor} \label{cor 2}
			If we take $X=W$ and $Y=Z$ in Theorem \ref{th op3}, then we get 
			\begin{eqnarray*}
				w_t^2 \left( \begin{array}{cc}
					X&Y\\
					Y&X
				\end{array}\right) &=& \max \{w_t^2(X+Y), w_t^2(X-Y) \}\\
			&\leq& 4(1-t)^2 \left(w^2(X) + w(YX)\right)+ 2(1-t)^2 w(Y^2)\\
			&& + (1-t)^2 \left\|2|X|^2 + 3 |Y^*|^2 + |Y|^2 \right\|.
			\end{eqnarray*}
		\end{cor}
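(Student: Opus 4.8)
The plan is to derive this corollary purely as a specialization of Theorem \ref{th op3}: set $X=W$ and $Y=Z$, then collapse the three maxima and simplify the single off-diagonal numerical-radius term. No new inequality is required, so the work is bookkeeping rather than estimation.

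First I would record the left-hand identity. By Lemma \ref{weighted op}(iii) we have $w_t\!\left(\begin{smallmatrix} X & Y \\ Y & X \end{smallmatrix}\right)=\max\{w_t(X+Y),\,w_t(X-Y)\}$; since both quantities are nonnegative and $s\mapsto s^2$ is increasing on $[0,\infty)$, squaring commutes with the maximum and yields $w_t^2\!\left(\begin{smallmatrix} X & Y \\ Y & X \end{smallmatrix}\right)=\max\{w_t^2(X+Y),\,w_t^2(X-Y)\}$, which is exactly the equality asserted in the statement.

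Next I would substitute $X=W$, $Y=Z$ term by term into the bound of Theorem \ref{th op3}. The first maximum $\max\{w^2(X),w^2(W)\}$ reduces to $w^2(X)$, and the third maximum $\max\{w(YZ),w(ZY)\}$ reduces to $w(Y^2)$. In the last term, the two norm-arguments $2|X|^2+3|Y^*|^2+|Z|^2$ and $2|W|^2+3|Z^*|^2+|Y|^2$ both become $2|X|^2+3|Y^*|^2+|Y|^2$, so that maximum collapses to the single norm $\big\|2|X|^2+3|Y^*|^2+|Y|^2\big\|$.

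The only piece needing a separate observation is the second, $4(1-t)^2\,w\!\left(\begin{smallmatrix} O & YW \\ ZX & O \end{smallmatrix}\right)$. Under the substitution this becomes $w\!\left(\begin{smallmatrix} O & YX \\ YX & O \end{smallmatrix}\right)$, and applying Lemma \ref{weighted op}(ii) at the parameter $t=\tfrac12$ (where $w_t$ coincides with the classical numerical radius $w$) gives $w\!\left(\begin{smallmatrix} O & YX \\ YX & O \end{smallmatrix}\right)=w(YX)$. Collecting the four simplified pieces produces $4(1-t)^2\big(w^2(X)+w(YX)\big)+2(1-t)^2 w(Y^2)+(1-t)^2\big\|2|X|^2+3|Y^*|^2+|Y|^2\big\|$, which is precisely the claimed bound. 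Since the argument is entirely mechanical, I foresee no genuine obstacle; the one point I would double-check is that the two norm-arguments in the final maximum really coincide, which hinges on imposing $X=W$ and $Y=Z$ simultaneously, as noted above.
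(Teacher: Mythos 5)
Your proposal is correct and is exactly the argument the paper intends: the corollary is a direct specialization of Theorem \ref{th op3} with $X=W$, $Y=Z$, using Lemma \ref{weighted op}(iii) for the equality and Lemma \ref{weighted op}(ii) at $t=\tfrac12$ to reduce $w\left(\begin{smallmatrix} O&YX\\ YX&O \end{smallmatrix}\right)$ to $w(YX)$. All four terms simplify as you describe, so nothing further is needed.
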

		\begin{remark}
			Taking $X = Y$ in corollary \ref{cor 2} we get 
			\begin{eqnarray*}
				4w_t^2(X) &\leq& 4(1-t)^2 \left(w^2(X) + w(X^2) \right) \\
				&& + 2(1-t)^2 w(X^2) + 3(1-t)^2 \left\||X|^2 + |X^*|^2 \right\|\\
				&=& 4(1-t)^2 w^2(X) + 6 (1-t)^2 w(X^2) + 3 (1-t)^2 \left\||X|^2 + |X^*|^2 \right\|.
			\end{eqnarray*}
			In particular if we take $t = \frac{1}{2}$ we get the  following well known inequality \[w^2(X) \leq \frac{1}{2} w(X^2) + \frac{1}{4} \left\| |X|^2 + |X^*|^2\right\|. \]
		\end{remark}
		Next we state  following lemma which is the extension of Buzano inequality.
	\begin{lemma} \label{ext buzano}\cite[Lemma 3.1]{BF}
		Let $x, y, e \in \mathcal{H}$ with $\|e\| =1$. The for $\alpha \in [0, 1]$, \[\left|\langle x, e \rangle \langle e, y \rangle \right|^2 \leq \frac{1}{4} \left[(1+\alpha) \|x\|^2\|y\|^2 + (3-\alpha) \|x\|\|y\| |\langle x, y \rangle| \right]. \]
	\end{lemma}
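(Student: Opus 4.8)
The plan is to derive this parametrized refinement directly from the classical Buzano inequality (Lemma \ref{buzano}) together with Cauchy--Schwarz, rather than reproving it from scratch via a reflection-operator identity. The key structural observation is that the right-hand side is affine in $\alpha$, since
\[\frac14\bigl[(1+\alpha)\|x\|^2\|y\|^2 + (3-\alpha)\|x\|\|y\|\,|\langle x,y\rangle|\bigr] = \frac14\bigl[\|x\|^2\|y\|^2 + 3\|x\|\|y\|\,|\langle x,y\rangle|\bigr] + \frac{\alpha}{4}\|x\|\|y\|\bigl(\|x\|\|y\| - |\langle x,y\rangle|\bigr),\]
and the second summand is nonnegative for every $\alpha\in[0,1]$ because $|\langle x,y\rangle|\le\|x\|\|y\|$ by Cauchy--Schwarz. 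Hence the stated bound is smallest at $\alpha=0$, so it suffices to prove the single extremal case $\alpha=0$; every other value of $\alpha$ then follows by monotonicity.

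For the case $\alpha=0$ I would start from Lemma \ref{buzano}, square both (nonnegative) sides, and expand:
\[|\langle x,e\rangle\langle e,y\rangle|^2 \le \frac14\bigl(|\langle x,y\rangle| + \|x\|\|y\|\bigr)^2 = \frac14\bigl(|\langle x,y\rangle|^2 + 2\|x\|\|y\|\,|\langle x,y\rangle| + \|x\|^2\|y\|^2\bigr).\]
The only remaining step is to absorb the unwanted square term: since $|\langle x,y\rangle|\le\|x\|\|y\|$, multiplying through by $|\langle x,y\rangle|\ge 0$ gives $|\langle x,y\rangle|^2\le\|x\|\|y\|\,|\langle x,y\rangle|$, and substituting this into the displayed bound yields exactly $\frac14\bigl(\|x\|^2\|y\|^2 + 3\|x\|\|y\|\,|\langle x,y\rangle|\bigr)$, which is the asserted inequality at $\alpha=0$.

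Combining the two paragraphs completes the argument: for general $\alpha\in[0,1]$ the $\alpha=0$ estimate is dominated by the claimed right-hand side, so the inequality holds for all admissible $\alpha$. I do not expect a genuine obstacle, since everything reduces to elementary manipulation of the classical Buzano inequality; the only point requiring care is recognizing that the parameter enters monotonically, so that the whole one-parameter family collapses to its extremal member $\alpha=0$. (It is worth flagging that the source \cite{BF} may instead prove this as a self-contained refinement of Cauchy--Schwarz using the reflection $I-2\langle\cdot,e\rangle e$; should a stand-alone argument be preferred over invoking Lemma \ref{buzano}, one can run that reflection computation, bounding $|\langle x,y\rangle-2\langle e,y\rangle\langle x,e\rangle|\le\|x\|\|y\|$ and feeding the resulting quadratic estimate for $|\langle x,e\rangle\langle e,y\rangle|$ back in, which is the only place a slightly subtle inequality is used.)
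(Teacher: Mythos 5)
Your proof is correct, and it necessarily takes a different route from the paper, because the paper gives no proof of this lemma at all: it is quoted verbatim with a citation to \cite[Lemma 3.1]{BF}, so your derivation is an addition rather than a variant of an internal argument. Both of your steps check out: the right-hand side equals
\[
\frac14\bigl[\|x\|^2\|y\|^2+3\|x\|\|y\|\,|\langle x,y\rangle|\bigr]+\frac{\alpha}{4}\,\|x\|\|y\|\bigl(\|x\|\|y\|-|\langle x,y\rangle|\bigr),
\]
and the coefficient of $\alpha$ is nonnegative by Cauchy--Schwarz, so the $\alpha=0$ case is the strongest member of the family; squaring Lemma \ref{buzano} and absorbing $|\langle x,y\rangle|^2\le\|x\|\|y\|\,|\langle x,y\rangle|$ gives exactly that case. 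It is worth noting that the standard proof of the parametrized statement (and, in essence, the one in \cite{BF}) is just the weighted version of your absorption step: writing $|\langle x,y\rangle|^2\le\alpha\|x\|^2\|y\|^2+(1-\alpha)\|x\|\|y\|\,|\langle x,y\rangle|$ inside the squared Buzano bound yields the $\alpha$-family in one line, so your monotonicity reduction and the original parametrization are two organizations of the same estimate; your speculative parenthetical about a reflection-operator argument is not needed and should not be leaned on. What your approach buys is twofold: it makes the paper self-contained, since Lemma \ref{buzano} is already stated there, and it exposes the structural fact that this ``refinement'' is never sharper than the square of Buzano's inequality itself --- the whole family collapses onto its $\alpha=0$ member, which is in turn a relaxation of squared Buzano, since their difference is $\frac14|\langle x,y\rangle|(\|x\|\|y\|-|\langle x,y\rangle|)\ge 0$.
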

Using the above lemma we prove our next theorem.

		\begin{theorem}\label{th op4}
				Let $X, Y, Z, W \in \mathcal{B}(\mathcal{H})$ and $t \in [0,1]$. Then 
			\begin{eqnarray*}
				w_t^4  \left( \begin{array}{cc}
					X&Y\\
					Z&W
				\end{array}\right) &\leq& 128(1-t)^4 \max\{w^4(X), w^4(W) \}\\
			&& + 16(1-t)^4 (1+\alpha) \max \{\left\||Z|^4 + |Y^*|^4 \right\|, \left\||Y|^4 + |Z^*|^4 \right\| \}\\
			&&+ 16(1-t)^4 (3-\alpha) \max \{\left\||Z|^2 + |Y^*|^2\right\|, \left\||Y|^2 + |Z^*|^2 \right\| \}\\&& \times \max \{w(YZ), w(ZY) \}.
			\end{eqnarray*}
		\end{theorem}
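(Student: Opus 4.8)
The plan is to imitate the decomposition strategy already used in Theorems~\ref{th op2} and~\ref{th op3}. I would set $R=\begin{pmatrix} X & O\\ O & W\end{pmatrix}$ and $S=\begin{pmatrix} O & Y\\ Z & O\end{pmatrix}$, so that $T=R+S$, fix a unit vector $x\in\mathcal{H}\oplus\mathcal{H}$, and begin from the same opening estimate as in those proofs, namely $|\langle((1-2t)T^{*}+T)x,x\rangle|\le 2(1-t)\left(|\langle Rx,x\rangle|+|\langle Sx,x\rangle|\right)$. Raising this to the fourth power and invoking convexity of $s\mapsto s^{2}$ twice (i.e.\ $(a+b)^{4}\le 8(a^{4}+b^{4})$) gives
\[|\langle((1-2t)T^{*}+T)x,x\rangle|^{4}\le 128(1-t)^{4}\left(|\langle Rx,x\rangle|^{4}+|\langle Sx,x\rangle|^{4}\right).\]
The diagonal contribution is then immediate, since $|\langle Rx,x\rangle|^{4}\le w^{4}(R)=\max\{w^{4}(X),w^{4}(W)\}$ by Lemma~\ref{weighted op}(i); this yields the first summand with its coefficient $128(1-t)^{4}$.

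The heart of the argument is the off-diagonal term $|\langle Sx,x\rangle|^{4}$, and here I would apply the extended Buzano inequality (Lemma~\ref{ext buzano}) with the choice $e=x$ and the two free vectors taken to be $Sx$ and $S^{*}x$. The decisive point is the identity $\langle x,S^{*}x\rangle=\langle Sx,x\rangle$, which forces the product $\langle Sx,x\rangle\langle x,S^{*}x\rangle$ to equal $\langle Sx,x\rangle^{2}$; consequently the left-hand side of Lemma~\ref{ext buzano} is precisely $|\langle Sx,x\rangle|^{4}$ rather than a mere square. Using in addition $\langle Sx,S^{*}x\rangle=\langle S^{2}x,x\rangle$, this produces
\[|\langle Sx,x\rangle|^{4}\le\tfrac{1}{4}\left[(1+\alpha)\|Sx\|^{2}\|S^{*}x\|^{2}+(3-\alpha)\|Sx\|\,\|S^{*}x\|\,|\langle S^{2}x,x\rangle|\right].\]

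It then remains to convert the three analytic quantities into operator quantities. For the first I would write $\|Sx\|^{2}\|S^{*}x\|^{2}=\langle|S|^{2}x,x\rangle\langle|S^{*}|^{2}x,x\rangle$, apply the arithmetic–geometric mean inequality, and then the power inequality (Lemma~\ref{lemmasimon} with $r=2$) to each factor, obtaining $\|Sx\|^{2}\|S^{*}x\|^{2}\le\tfrac12\langle(|S|^{4}+|S^{*}|^{4})x,x\rangle$. For the second, arithmetic–geometric mean gives $\|Sx\|\,\|S^{*}x\|\le\tfrac12\langle(|S|^{2}+|S^{*}|^{2})x,x\rangle$, while $|\langle S^{2}x,x\rangle|\le w(S^{2})$. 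Feeding these three bounds back collapses the factor $128(1-t)^{4}\cdot\tfrac14\cdot\tfrac12$ into $16(1-t)^{4}$ in front of each Buzano term. Finally I would read off the block computations $|S|^{4}+|S^{*}|^{4}=\mathrm{diag}(|Z|^{4}+|Y^{*}|^{4},\,|Y|^{4}+|Z^{*}|^{4})$, $|S|^{2}+|S^{*}|^{2}=\mathrm{diag}(|Z|^{2}+|Y^{*}|^{2},\,|Y|^{2}+|Z^{*}|^{2})$ and $S^{2}=\mathrm{diag}(YZ,ZY)$, so that the operator norms and the numerical radius turn into the stated maxima; taking the supremum over unit $x$ finishes the proof.

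The step I expect to be the main obstacle — indeed the crux on which the whole fourth-power estimate rests — is the correct invocation of the extended Buzano inequality so that its squared left-hand side reproduces $|\langle Sx,x\rangle|^{4}$ and not $|\langle Sx,x\rangle|^{2}$; pinning down the identification $\langle x,S^{*}x\rangle=\langle Sx,x\rangle$ is exactly what makes the exponents align. The remaining work (the two applications of arithmetic–geometric mean, the power inequality, and the careful tracking of the constants $8$, $16$ and $128$) is routine but must be carried out precisely to land on the stated coefficients.
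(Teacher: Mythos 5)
Your proposal is correct and follows essentially the same route as the paper's own proof: the identical splitting $T=R+S$, the bound $|\langle((1-2t)T^{*}+T)x,x\rangle|^{4}\le 128(1-t)^{4}(|\langle Rx,x\rangle|^{4}+|\langle Sx,x\rangle|^{4})$ via convexity, the application of Lemma~\ref{ext buzano} to $|\langle Sx,x\rangle\langle x,S^{*}x\rangle|^{2}=|\langle Sx,x\rangle|^{4}$, and the same block identifications of $|S|^{2}+|S^{*}|^{2}$, $|S|^{4}+|S^{*}|^{4}$ and $S^{2}$. Indeed, you spell out (via the arithmetic--geometric mean inequality and Lemma~\ref{lemmasimon} with $r=2$) the conversion steps that the paper leaves implicit, so nothing is missing.
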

		\begin{proof}
			Let $T, R, S $ as in Theorem \ref{th op2} and $x \in \mathcal{H} \oplus \mathcal{H}$ with $\|x\| =1$. Then 
			\begin{eqnarray*}
				\left| \left\langle [ (1-2t) T^* + T]x, x \right\rangle \right|^4 &\leq& \left[(1-2t)|\langle T^*x, x \rangle| + |\langle Tx, x \rangle| \right]^4\\
				&=& 16 (1-t)^4 |\langle Tx, x \rangle|^4\\
				&\leq& 16(1-t)^4 \left(|\langle Rx, x \rangle| + |\langle Sx, x \rangle| \right)^4\\
				&\leq& 16\cdot 8 (1-t)^4 \left(|\langle Rx, x \rangle|^4 + |\langle Sx, x \rangle|^4 \right)\\
				&&\,\,\,\,\,\,\,\,\,\,\,\,\,\,\,\,\,\,\,\,\,\,\,(\mbox{using the convexity of $f(t) = t^4$})\\
				&=& 128 (1-t)^4 \left(|\langle Rx,x \rangle|^4 + |\langle Sx, x \rangle \langle x, S^*x \rangle|^2 \right)\\
				&\leq& 128 (1-t)^4 |\langle Rx, x \rangle|^4 \\
				&& + 32(1-t)^4 (1+\alpha) \|Sx\|^2\|S^*x\|^2\\&& +32(1-t)^4 (3-\alpha) \|Sx\|\|S^*x\| |\langle Sx, S^*x \rangle| \\
				&&\,\,\,\,\,\,\,\,\,\,\,\,\,\,\,(\mbox{using Lemma \ref{ext buzano}})\\
				&\leq& 128(1-t)^4 |\langle Rx, x \rangle|^4\\ && + 16(1-t)^4 (1+\alpha) \left\langle \left(|S|^4 + |S^*|^4\right)x, x \right\rangle \\ && + 16(1-t)^4 (3-\alpha)  \left\langle \left(|S|^2 + |S^*|^2\right)x, x \right\rangle \times |\langle S^2x, x \rangle|\\
				&=& 128 (1-t)^2 \left|\left\langle\left( \begin{array}{cc}
					X&O\\
					O&W
				\end{array}\right)x,x \right\rangle \right|\\
			&+&  16(1-t)^4 (1+\alpha) \left\langle\left( \begin{array}{cc}
				|Z|^4 + |Y^*|^4&O\\
				O&|Y|^4 + |Z^*|^4
			\end{array}\right)x,x \right\rangle\\&+&  16 (1-t)^4 (3-\alpha) \left\langle\left( \begin{array}{cc}
			|Z|^2 + |Y^*|^2	&O\\
			O&	|Z|^2 + |Y^*|^2
		\end{array}\right)x, x \right\rangle\\&& \times \left|\left\langle\left( \begin{array}{cc}
		YZ&O\\
		O&ZY
	\end{array}\right)x,x \right\rangle	 \right|.
			\end{eqnarray*}
		Now taking supremum over $x$ with $\|x\|=1$ we get our desired inequality.
		\end{proof}
		\begin{cor}\label{cor 3}
			If we take $X = W$ and $Y = Z$ in Theorem \ref{th op4} we get 
			\begin{eqnarray*}
				w_t^2 \left( \begin{array}{cc}
					X&Y\\
					Y&X
				\end{array}\right) &=& \max \{w_t^4(X+Y), w_t^2(X-Y) \}\\
			&\leq& 128(1-t)^4 w^4(X) + 16(1-t)^4 (1+\alpha) \left\||Y|^4 + |Y^*|^4 \right\|\\
			&& + 16(1-t)^4 (3-\alpha) \left\||Y|^2 + |Y^*|^2 \right\|w(Y^2).
			\end{eqnarray*}
		\end{cor}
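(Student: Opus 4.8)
The plan is to derive Corollary~\ref{cor 3} as a direct specialization of Theorem~\ref{th op4}, so the only real work is bookkeeping on the various maxima; there is no genuine obstacle, only care with the simplifications and with the (typographical) exponents.

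First I would handle the left-hand side. Setting $X=W$ and $Y=Z$, Lemma~\ref{weighted op}(iii) gives
\[
w_t\left( \begin{array}{cc} X&Y\\ Y&X \end{array}\right) = \max\{w_t(X+Y),\, w_t(X-Y)\}.
\]
Since both $w_t(X+Y)$ and $w_t(X-Y)$ are nonnegative and $s\mapsto s^4$ is increasing on $[0,\infty)$, raising to the fourth power produces $\max\{w_t^4(X+Y),\, w_t^4(X-Y)\}$, which is the quantity appearing on the left of the claimed inequality.

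Next I would specialize the right-hand bound of Theorem~\ref{th op4}. The term $\max\{w^4(X),\, w^4(W)\}$ collapses at once to $w^4(X)$. Because $Z=Y$ forces $|Z|=|Y|$ and $|Z^*|=|Y^*|$, the two entries inside each remaining maximum coincide: one checks $\||Z|^4+|Y^*|^4\| = \||Y|^4+|Z^*|^4\| = \||Y|^4+|Y^*|^4\|$, likewise $\||Z|^2+|Y^*|^2\| = \||Y|^2+|Z^*|^2\| = \||Y|^2+|Y^*|^2\|$, and $w(YZ)=w(ZY)=w(Y^2)$. Hence every maximum reduces to its single common value, and the bound of Theorem~\ref{th op4} becomes
\[
128(1-t)^4 w^4(X) + 16(1-t)^4(1+\alpha)\,\||Y|^4+|Y^*|^4\| + 16(1-t)^4(3-\alpha)\,\||Y|^2+|Y^*|^2\|\,w(Y^2).
\]

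Combining the two displays yields the assertion. The step requiring the most attention is verifying that $Z=Y$ equates the two arguments of every maximum, which follows because $|Z|$ and $|Z^*|$ reduce to $|Y|$ and $|Y^*|$ respectively; this is the whole content of the proof. I would also note that the exponents printed in the statement are inconsistent (the outer $w_t^2$ and the inner $w_t^2(X-Y)$); the reading forced by Theorem~\ref{th op4} together with Lemma~\ref{weighted op}(iii) is $w_t^4$ uniformly, and that is exactly what the substitution proves.
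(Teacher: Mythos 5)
Your proposal is correct and follows exactly the route the paper intends: the corollary is stated there without a separate proof, as an immediate specialization of Theorem \ref{th op4} combined with Lemma \ref{weighted op}(iii), which is precisely the bookkeeping you carry out. Your observation that the printed exponents ($w_t^2$ on the left and $w_t^2(X-Y)$ inside the maximum) are typographical slips for $w_t^4$ is also correct, since the substitution into Theorem \ref{th op4} forces the fourth power throughout.
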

		\begin{remark}
			Taking $X = Y$ and $t= \frac{1}{2}$ we get the following result which was\\ proved by Bani-Domi et. al. in \cite[Remark 3.2]{BF}
			\[w^4(X) \leq \frac{1+\alpha}{8} \left\||X|^4 + |X^*|^4 \right\| + \frac{3-\alpha}{8} \left\||X|^2 + |X^*|^2 \right\| w(X^2). \]
		\end{remark}
		For the next theorem we need the following lemma which was proved by\\ Bani-Domi et. al. in \cite[Lemma 3.2]{BF}.
		\begin{lemma} \label{Buzano gen}
			Let $x, y, e \in \mathcal{H}$ with $\|e\| =1$ and $\alpha \in [0,1].$ Then 
			\begin{eqnarray*}
			\left|\langle x, e \rangle \langle e, y \rangle \right|^2 &&\leq \frac{\alpha}{4} \left(\|x\|^2\|y\|^2 + 2 \|x\|\|y\| |\langle x, y \rangle| + |\langle x, y \rangle|^2 \right)\\
			&& + \frac{1-\alpha}{2} \left(\|x\|\|y\| + |\langle x, y \rangle| \right) \left|\langle x, e \rangle \langle e, y \rangle \right|.
			\end{eqnarray*}
		\end{lemma}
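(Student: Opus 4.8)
The plan is to deduce this refined Buzano-type estimate directly from the ordinary Buzano inequality (Lemma \ref{buzano}) together with an elementary convex splitting of the square. To lighten the notation I would set $B = |\langle x, e \rangle \langle e, y \rangle|$, $M = \|x\|\|y\|$ and $P = |\langle x, y \rangle|$, so that Lemma \ref{buzano} reads $B \leq \tfrac{1}{2}(M+P)$. Observing that $\|x\|^2\|y\|^2 + 2\|x\|\|y\||\langle x,y\rangle| + |\langle x,y\rangle|^2 = (M+P)^2$, the claimed inequality is then equivalent to
\[
B^2 \leq \frac{\alpha}{4}(M+P)^2 + \frac{1-\alpha}{2}(M+P)B .
\]

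First I would write $B^2 = \alpha B^2 + (1-\alpha)B^2$ for the given $\alpha \in [0,1]$. On the first summand I apply Buzano in squared form: since $B \leq \tfrac{1}{2}(M+P)$ and both sides are nonnegative, we get $B^2 \leq \tfrac14(M+P)^2$, hence $\alpha B^2 \leq \tfrac{\alpha}{4}(M+P)^2$. On the second summand I apply Buzano to only one of the two factors in $B^2 = B\cdot B$, obtaining $B^2 \leq \tfrac12(M+P)B$ and therefore $(1-\alpha)B^2 \leq \tfrac{1-\alpha}{2}(M+P)B$. Adding the two bounds gives precisely the displayed inequality.

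Expanding $(M+P)^2$ back into the three terms $\|x\|^2\|y\|^2$, $2\|x\|\|y\||\langle x,y\rangle|$ and $|\langle x,y\rangle|^2$ recovers the statement as written. There is essentially no serious obstacle here: the single idea is to interpolate between the two natural ways of invoking Buzano, namely squaring it outright (sharp at $\alpha=1$) versus retaining one linear factor of $B$ (sharp at $\alpha=0$), through the convex weight $\alpha$. Indeed, the endpoints $\alpha=0$ and $\alpha=1$ reduce respectively to the plain Buzano inequality and to its square, which furnishes a convenient consistency check on the final bound.
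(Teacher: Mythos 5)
Your proof is correct, but note that the paper itself offers no proof of this lemma to compare against: it is quoted from Bani-Domi and Kittaneh \cite[Lemma 3.2]{BF} and used as a black box. What your argument adds is self-containedness. Writing $B=|\langle x,e\rangle\langle e,y\rangle|$, $M=\|x\|\|y\|$, $P=|\langle x,y\rangle|$, you correctly observe that the stated right-hand side is exactly $\frac{\alpha}{4}(M+P)^2+\frac{1-\alpha}{2}(M+P)B$, and then the convex split $B^2=\alpha B^2+(1-\alpha)B^2$, combined with Buzano's inequality $B\le\frac{1}{2}(M+P)$ (Lemma \ref{buzano}) applied squared to the first summand and to a single factor of $B$ in the second, yields the claim at once. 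Every step is legitimate: squaring preserves inequalities between nonnegative numbers, and multiplying Buzano through by $B\ge 0$ is harmless; your endpoint checks ($\alpha=1$ recovers Buzano squared, $\alpha=0$ recovers Buzano itself) confirm the interpolation is what it should be. This is essentially the natural derivation one would expect behind the cited result, so your route does not conflict with the source; it simply makes the lemma independent of \cite{BF}, resting only on Lemma \ref{buzano}, which the paper has already stated.
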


		\begin{theorem} \label{th op5}
			Let $X, Y, Z, W \in \mathcal{H}$ and $t, \alpha \in [0,1].$ Then 
			\begin{eqnarray*}
				w_t^4 \left(\begin{array}{cc}
					X&Y\\
					Z&W
				\end{array} \right) &\leq& 128 (1-t)^4 \max \{w^4(X), w^4(W) \} \\
			&& + 16\alpha (1-t)^4 \max \{\left\||Z|^4 + |Y^*|^4 \right\|, \left\||Y|^4 + |Z^*|^4 \right\| \} \\ && + 32\alpha(1-t)^4 \max\{w(YZ), w(ZY) \}\\&&\times \max \{\left\||Z|^2 + |Y^*|^2 \right\|,\left\| |Y|^2 + |Z^*|^2\right\| \} \\ && + 32\alpha (1-t)^4 \max \{w^2(YZ), w^2(ZY) \}\\&&+ 32(1-\alpha)(1-t)^4 \max \left\{\left\||Z|^2 + |Y^*|^2 \right\|, \left\| |Y|^2 + |Z^*|^2\right\| \right\} \\&&\times  w^2\left(\begin{array}{cc}
				O&Y\\
				Z&O
			\end{array} \right)\\ && + 64(1-\alpha) (1-t)^4 \max \{w(YZ), w(ZY) \} w^2\left(\begin{array}{cc}
			O&Y\\
			Z&O
		\end{array} \right).
			\end{eqnarray*}
		\end{theorem}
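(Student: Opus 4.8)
The plan is to run the same architecture as the proof of Theorem~\ref{th op4}, replacing the use of Lemma~\ref{ext buzano} by the finer Lemma~\ref{Buzano gen}. As there, I write $T=R+S$ with $R=\left(\begin{smallmatrix} X & O \\ O & W\end{smallmatrix}\right)$ and $S=\left(\begin{smallmatrix} O & Y \\ Z & O\end{smallmatrix}\right)$, and fix $x\in\mathcal{H}\oplus\mathcal{H}$ with $\|x\|=1$. Since $|\langle T^*x,x\rangle|=|\langle Tx,x\rangle|$, the triangle inequality gives $|\langle[(1-2t)T^*+T]x,x\rangle|^4\le 16(1-t)^4|\langle Tx,x\rangle|^4$. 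Then $T=R+S$, one more triangle inequality, and the convexity of $s\mapsto s^4$ (which supplies the factor $8$) yield
\[|\langle[(1-2t)T^*+T]x,x\rangle|^4\le 128(1-t)^4\left(|\langle Rx,x\rangle|^4+|\langle Sx,x\rangle|^4\right).\]

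Next I rewrite $|\langle Sx,x\rangle|^4=|\langle Sx,x\rangle\langle x,S^*x\rangle|^2$ and invoke Lemma~\ref{Buzano gen} under the substitution $x\mapsto Sx$, $y\mapsto S^*x$, $e\mapsto x$. This splits the second summand into five pieces. I would dominate them one at a time using the identities $\langle Sx,S^*x\rangle=\langle S^2x,x\rangle$, $\|Sx\|^2=\langle|S|^2x,x\rangle$, $\|S^*x\|^2=\langle|S^*|^2x,x\rangle$, together with the arithmetic--geometric mean bound $\|Sx\|\,\|S^*x\|\le\tfrac12\langle(|S|^2+|S^*|^2)x,x\rangle$, its squared analogue $\|Sx\|^2\|S^*x\|^2\le\tfrac12(\langle|S|^2x,x\rangle^2+\langle|S^*|^2x,x\rangle^2)$, and Lemma~\ref{lemmasimon} in the form $\langle|S|^2x,x\rangle^2\le\langle|S|^4x,x\rangle$. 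This replaces each factor either by one of the positive operators $|S|^4+|S^*|^4$, $|S|^2+|S^*|^2$, $S^2$, or by $|\langle Sx,x\rangle|^2$ itself.

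The bookkeeping is then finished by computing the block forms $|S|^2=\mathrm{diag}(|Z|^2,|Y|^2)$, $|S^*|^2=\mathrm{diag}(|Y^*|^2,|Z^*|^2)$, whence $|S|^2+|S^*|^2=\mathrm{diag}(|Z|^2+|Y^*|^2,\ |Y|^2+|Z^*|^2)$ and likewise $|S|^4+|S^*|^4=\mathrm{diag}(|Z|^4+|Y^*|^4,\ |Y|^4+|Z^*|^4)$, together with $S^2=\mathrm{diag}(YZ,ZY)$ and $R=\mathrm{diag}(X,W)$. Taking the supremum over unit $x$ and using that the numerical radius and the norm of a block-diagonal operator are the maxima over its diagonal blocks, the surviving quantities become $|\langle Rx,x\rangle|^4\to\max\{w^4(X),w^4(W)\}$, $\langle(|S|^2+|S^*|^2)x,x\rangle\to\max\{\||Z|^2+|Y^*|^2\|,\||Y|^2+|Z^*|^2\|\}$, $|\langle S^2x,x\rangle|\to\max\{w(YZ),w(ZY)\}$, and $|\langle Sx,x\rangle|^2\to w^2\left(\begin{smallmatrix} O & Y \\ Z & O\end{smallmatrix}\right)$. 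After carrying the factor $128(1-t)^4$ through the AM--GM halvings, the constants $\alpha$ and $1-\alpha$ of Lemma~\ref{Buzano gen} produce exactly the coefficients $16\alpha$, $32\alpha$, $32\alpha$, $32(1-\alpha)$, $64(1-\alpha)$ (each times $(1-t)^4$), reproducing the six summands in the statement.

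The only genuinely delicate point is the passage to the supremum in the three cross terms, where a product of two $x$-dependent quantities appears: there one must bound $\sup_x A(x)B(x)$ by $\bigl(\sup_x A(x)\bigr)\bigl(\sup_x B(x)\bigr)$, which is the standard, slightly lossy, step that decouples the factors into separate numerical-radius and norm quantities. Everything else is the routine algebra of squaring $S$ and reading off its diagonal blocks, entirely parallel to Theorem~\ref{th op4}.
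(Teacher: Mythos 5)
Your proposal is correct and follows essentially the same route as the paper's own proof: the identical decomposition $T=R+S$, the reduction via the triangle inequality and convexity of $s\mapsto s^4$ to $128(1-t)^4\left(|\langle Rx,x\rangle|^4+|\langle Sx,x\rangle|^4\right)$, the application of Lemma~\ref{Buzano gen} with the substitution $(Sx,\,S^*x,\,x)$, the AM--GM and Lemma~\ref{lemmasimon} estimates producing the operators $|S|^4+|S^*|^4$ and $|S|^2+|S^*|^2$, the same block-diagonal identifications, and the final decoupled supremum. Your accounting of the coefficients $16\alpha$, $32\alpha$, $32\alpha$, $32(1-\alpha)$, $64(1-\alpha)$ matches the paper exactly, so there is nothing to add.
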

			\begin{proof}
				Let $T, R, S$ are as in Theorem \ref{th op2}. Assume that $x \in \mathcal{H} \oplus \mathcal{H}$ with $\|x\| =1.$ Then 
				\begin{eqnarray*}
					\left|\langle \{(1-2t)T^* + T \}x, x \rangle \right|^4 &=& 16 (1-t)^4 |\langle Tx, x \rangle|^4\\
					&\leq& 16(1-t)^4 \left( |\langle Rx, x \rangle| +  |\langle Sx, x \rangle| \right)^4\\
					&\leq& 128 (1-t)^4  \left( |\langle Rx, x \rangle|^4 +  |\langle Sx, x \rangle|^4 \right)\\&&\,\,\,\,\,\,\,\,\,\,\,\,\,\,\,\,\,\,\,\,\,\,\,\,~(\mbox{using convexity of $f(t) = t^4$})\\
					&=&  128 (1-t)^4  \left( |\langle Rx, x \rangle|^4 +  |\langle Sx, x \rangle \langle x , S^*x \rangle|^2 \right) \\
					&\leq& 128(1-t)^4 |\langle Rx, x \rangle|^4 \\
					&& + 32 \alpha (1-t)^4 \left(\|Sx\|^2\|S^*x\|^2 + 2 \|Sx\|\|S^*x\| |\langle Sx, S^*x \rangle|\right)\\&& + 32 \alpha (1-t)^4 |\langle Sx, S^*x \rangle |^2 \\
					&&+ 64(1-t)^4(1-\alpha) \left(\|Sx\|\|S^*x\| + |\langle Sx, S^*x \rangle| \right) |\langle Sx, x \rangle|^2 
						\end{eqnarray*}
				\begin{eqnarray*}
					&\leq& 128 (1-t)^4 |\langle Rx, x \rangle|^4 \\
					&& + 16\alpha (1-t)^4 \left\langle (|S|^4 + |S^*|^4)x, x   \right\rangle\\&& + 32\alpha (1-t)^4 |\langle S^2x, x \rangle| \left\langle (|S|^2 + |S^*|^2)x, x  \right\rangle\\
					&& + 32\alpha (1-t)^4 |\langle S^2x, x \rangle|^2\\&& + 32(1-\alpha)(1-t)^4 \left(\left\langle (|S|^2 + |S^*|^2)x,x  \right\rangle+ 2 |\langle S^2x, x \rangle  |\right)\\ && \times |\langle Sx, x \rangle|^2\\ 
					&=& 128(1-t)^4 \left|\left\langle \left( \begin{array}{cc}
						X&O\\
						O&W
					\end{array}\right)x,x \right\rangle \right|^4 \\ && + 16\alpha (1-t)^4 \left|\left\langle \left( \begin{array}{cc}
					|Z|^4 + |Y^*|^4&O\\
					O&|Y|^4 + |Z^*|^4
				\end{array}\right)x,x \right\rangle \right| \\&&+ 32\alpha(1-t)^4 \left|\left\langle \left( \begin{array}{cc}
				YZ&O\\
				O&ZY
			\end{array}\right)x,x \right\rangle \right|\\&& \times \left|\left\langle \left( \begin{array}{cc}
			|Z|^2 + |Y^*|^2&O\\
			O&|Y|^2 + |Z^*|^2
		\end{array}\right)x,x \right\rangle \right|  \\&&+ 32 \alpha (1-t)^4 \left|\left\langle \left( \begin{array}{cc}
		YZ&O\\
		O&ZY
	\end{array}\right)x,x \right\rangle \right|^2\\ && + 32(1-\alpha)(1-t)^4 \left|\left\langle \left( \begin{array}{cc}
	|Z|^2 + |Y^*|&O\\
	O&|Y|^2 + |Z^*|^2
\end{array}\right)x,x \right\rangle \right|  \\ &&\times \left|\left\langle \left( \begin{array}{cc}
O&Y\\
Z&O
\end{array}\right)x,x \right\rangle \right|^2\\&& + 64(1-\alpha)(1-t)^4 \left|\left\langle \left( \begin{array}{cc}
YZ&O\\
O&ZY
\end{array}\right)x,x \right\rangle \right| \\ && \times \left|\left\langle \left( \begin{array}{cc}
O&Y\\
Z&O
\end{array}\right)x,x \right\rangle \right|^2\\
&\leq& 128 (1-t)^4 \max \{w^4(X), w^4(W) \} \\
&& + 16\alpha (1-t)^4 \max \{\left\||Z|^4 + |Y^*|^4 \right\|, \left\||Y|^4 + |Z^*|^4 \right\| \} \\ && + 32\alpha(1-t)^4 \max\{w(YZ), w(ZY) \}\\ && \times \max \{\left\||Z|^2 + |Y^*|^2 \right\|,\left\| |Y|^2 + |Z^*|^2\right\| \} \\
 &&+ 32\alpha (1-t)^4 \max \{w^2(YZ), w^2(ZY) \}\\&&+ 32(1-\alpha)(1-t)^4 \max \{\left\||Z|^2 + |Y^*|^2 \right\|, \left\| |Y|^2 + |Z^*|^2\right\| \} \\&&\times w^2\left(\begin{array}{cc}
	O&Y\\
	Z&O
\end{array} \right)\\ && + 64(1-\alpha) (1-t)^4 \max \{w(YZ), w(ZY) \} w^2\left(\begin{array}{cc}
	O&Y\\
	Z&O
\end{array} \right).
				\end{eqnarray*}
			Now taking supremum over $x$ with $\|x\|=1$ we get our required inequality.
\end{proof}
By taking $X = W$ and $Y = Z$ in Theorem \ref{th op5} we get the following corollary.
\begin{cor} \label{cor 4}
(i)~	If $X, Y \in \mathcal{B}(\mathcal{H}).$ Then 
	\begin{eqnarray*}
		w^4 \left(\begin{array}{cc}
			X&Y\\
			Y&X
		\end{array} \right) &=& \max \{w^4(X+Y), w^4(X-Y) \}\\
	&\leq& 128 (1-t)^4 w^4(X) + 16 \alpha (1-t)^4 \left\||Y|^4 + |Y^*|^4\right\|\\
	&& + 32 \alpha (1-t)^4 w(Y^2) \||Y|^2 + |Y^*|^2\|
	 + 32 \alpha (1-t)^4 w^2(Y^2)\\
	&& + 32 (1-\alpha) (1-t)^4 w^2(Y) \left\||Y|^2 + |Y^*|^2 \right\|\\
	&& + 64 (1-\alpha) (1-t)^4 w(Y^2) w^2(Y).
	\end{eqnarray*}
(ii)~If we take $t = \frac{1}{2}$ in Theorem \ref{th op5} then we get the inequality proved by Bani-Domi et. al in \cite[Th. 3.3]{BF}.
\end{cor}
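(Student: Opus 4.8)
The plan is to obtain both parts as direct specializations of Theorem \ref{th op5}, so that no new estimate is required; all the work lies in substituting the symmetric block structure and collapsing the maxima.

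First I would establish the equality on the left of part (i). Setting $W = X$ and $Z = Y$ turns the $2\times 2$ block operator into $\left(\begin{smallmatrix} X & Y \\ Y & X \end{smallmatrix}\right)$, and Lemma \ref{weighted op}(iii) gives $w_t\left(\begin{smallmatrix} X & Y \\ Y & X \end{smallmatrix}\right) = \max\{w_t(X+Y), w_t(X-Y)\}$. Since every quantity involved is nonnegative and $s \mapsto s^4$ is increasing on $[0,\infty)$, the fourth power commutes with the maximum, so $w_t^4\left(\begin{smallmatrix} X & Y \\ Y & X \end{smallmatrix}\right) = \max\{w_t^4(X+Y), w_t^4(X-Y)\}$, which is the asserted identity (the left member being the weighted radius, written $w^4$ in the statement).

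Next I would substitute $W = X$ and $Z = Y$ into the right-hand side of Theorem \ref{th op5} and simplify each maximum. Under this substitution every paired argument of a $\max$ becomes identical: $\max\{w^4(X), w^4(W)\} = w^4(X)$; $\max\{\||Z|^4+|Y^*|^4\|, \||Y|^4+|Z^*|^4\|\} = \||Y|^4+|Y^*|^4\|$; $\max\{w(YZ), w(ZY)\} = w(Y^2)$; $\max\{\||Z|^2+|Y^*|^2\|, \||Y|^2+|Z^*|^2\|\} = \||Y|^2+|Y^*|^2\|$; and $\max\{w^2(YZ), w^2(ZY)\} = w^2(Y^2)$. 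The only factor that is not already a function of $X$ and $Y$ alone is the off-diagonal term $w^2\left(\begin{smallmatrix} O & Y \\ Z & O \end{smallmatrix}\right)$; with $Z = Y$ this is $w^2\left(\begin{smallmatrix} O & Y \\ Y & O \end{smallmatrix}\right)$, and invoking the classical off-diagonal identity $w\left(\begin{smallmatrix} O & Y \\ Y & O \end{smallmatrix}\right) = w(Y)$ (the $t=\tfrac12$ case of Lemma \ref{weighted op}(ii)) reduces it to $w^2(Y)$. Collecting the six collapsed summands reproduces exactly the right-hand side of part (i).

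For part (ii), I would set $t = \tfrac12$ directly in the statement of Theorem \ref{th op5}. Then $w_t = w$ and $(1-t)^4 = \tfrac{1}{16}$, so the prefactors $128(1-t)^4,\,16\alpha(1-t)^4,\,32\alpha(1-t)^4,\,64(1-\alpha)(1-t)^4$ become $8,\,\alpha,\,2\alpha,\,4(1-\alpha)$, and so on, which are precisely the constants appearing in \cite[Th.\ 3.3]{BF}; matching term by term yields the stated identification. I do not anticipate any genuine obstacle, as the whole argument is mechanical once Theorem \ref{th op5} is available; the only points needing a moment's care are the observation that raising to the fourth power commutes with the maximum (valid by monotonicity on $[0,\infty)$) and the correct use of the off-diagonal block identity from Lemma \ref{weighted op}(ii) to dispose of the lone $w^2\left(\begin{smallmatrix} O & Y \\ Y & O \end{smallmatrix}\right)$ factor.
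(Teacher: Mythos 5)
Your proposal is correct and is essentially the paper's own argument: the paper offers no separate proof, deriving Corollary \ref{cor 4} exactly as you do, by substituting $W=X$, $Z=Y$ into Theorem \ref{th op5}, collapsing each $\max$ to its common value, using Lemma \ref{weighted op}(iii) for the equality, and reducing $w^2\left(\begin{smallmatrix} O&Y\\ Y&O\end{smallmatrix}\right)$ to $w^2(Y)$. Your two points of care (monotonicity of $s\mapsto s^4$ so the fourth power commutes with the maximum, and the off-diagonal identity from Lemma \ref{weighted op}(ii) at $t=\tfrac12$) are exactly what is needed, and your reading of the statement's $w^4$ on the left as the weighted radius $w_t^4$ correctly repairs an evident typo in the paper.
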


			\bibliographystyle{amsplain}
				
		\end{document}